\documentclass{article}

\usepackage[margin=1in]{geometry}
\usepackage[numbers]{natbib}
\usepackage[utf8]{inputenc}
\usepackage[T1]{fontenc}
\usepackage[colorlinks,linkcolor=blue,citecolor=black,urlcolor=blue]{hyperref}
\usepackage{url}
\usepackage{amsmath,amssymb,amsfonts,amsthm,mathtools}
\usepackage{booktabs}
\usepackage{graphicx}
\usepackage{caption}
\usepackage{subcaption}
\usepackage{algorithm}
\usepackage{algpseudocode}
\usepackage{float}

\newtheorem{assumption}{Assumption}
\newtheorem{theorem}{Theorem}
\newtheorem{remark}{Remark}
\newtheorem{lemma}{Lemma}

\newcommand{\R}{\mathbb{R}}
\newcommand{\St}{\mathrm{St}}
\newcommand{\sym}{\operatorname{sym}}
\newcommand{\revise}[1]{{\color{black}#1}}

\allowdisplaybreaks

\title{A Retraction-Free EXTRA Method for Decentralized Optimization on the Stiefel Manifold}
\author{
\begin{minipage}[t]{0.48\textwidth}
\centering
Shu Li\\
Yau Mathematical Sciences Center, Tsinghua University, Beijing, China\\
\href{mailto:leo857290@gmail.com}{\texttt{leo857290@gmail.com}}
\end{minipage}
\hfill
\begin{minipage}[t]{0.48\textwidth}
\centering
Jiang Hu\thanks{Corresponding author.}\\
Yau Mathematical Sciences Center, Tsinghua University, Beijing, China\\
\href{mailto:jianghu@tsinghua.edu.cn}{\texttt{jianghu@tsinghua.edu.cn}}
\end{minipage}
}
\date{}

\begin{document}
\setlength{\parindent}{0pt}
\setlength{\parskip}{0.5em}
\maketitle

\begin{abstract}
Decentralized optimization provides a fundamental framework for large-scale learning and signal processing with distributed data. We study decentralized optimization with orthogonality constraints on the Stiefel manifold and propose RF-EXTRA, a distributed retraction-free primal-dual method on static undirected networks. The method combines an approximate gradient mapping for orthogonality-constrained optimization with an EXTRA-based decentralized recursion, thereby avoiding retractions while preserving a simple communication pattern. On the theoretical side, the analysis considers \revise{the joint error} $(\mathbf X_k-\overline{\mathbf X}_k,\mathbf s_k-\overline{\mathbf s}_k)$ \revise{in the local variables and local directions}, and establishes a contractive recursion for the joint error. 
\revise{This contractivity ensures that the joint error can be controlled using small yet constant step sizes, thus leading to an exact $\mathcal O(1/K)$ convergence rate of RF-EXTRA to a stationary point.} Experiments on PCA and low-rank matrix completion show that RF-EXTRA compares favorably with the reported decentralized baselines and exhibits strong communication efficiency on the tested tasks on the Stiefel manifold.
\end{abstract}

\noindent\textbf{MSC2020:} 65K10, 90C30, 49M37, 68W15

\noindent\textbf{Keywords:} decentralized optimization, Stiefel manifold, retraction-free method, EXTRA, nonconvex optimization

\section{Introduction}
\label{sec:introduction}

Decentralized optimization \cite{tsitsiklis1986distributed,nedic2009distributed,yuan2016convergence} studies how a network of agents cooperatively solves a global problem using local data and local communication. It has become a fundamental framework in large-scale learning \cite{alghunaim2022unified,yuan2018exact}, signal processing \cite{shi2015extra}, and control \cite{qiu2016distributed,duchi2011dual}, where data are naturally distributed and centralized aggregation is either costly or undesirable. \revise{In the Euclidean (i.e., unconstrained) decentralized optimization,} the goal is to combine computational scalability with communication efficiency while preserving the solution quality of the underlying centralized model.

Many decentralized problems are not \revise{Euclidean or unconstrained}. In distributed principal component analysis, subspace estimation, and related matrix-learning tasks, the decision variable is required to have orthonormal columns, and the optimization therefore takes place on the Stiefel manifold
\[
\St(d,r)=\{X\in\R^{d\times r}: X^\top X=I_r\}.
\]
Such orthogonality constraints also arise in deep models with orthogonal layers \cite{Huang2018OWN}, normalization mechanisms \cite{Cho2017RiemannianBN}, and low-rank fine-tuning schemes \cite{Zhang2024RiemannianLoRA,zhang2024retraction}. Accordingly, decentralized optimization on the Stiefel manifold has become an important problem in decentralized learning and matrix optimization.

\revise{Motivated by these applications, this paper studies the consensus-constrained optimization problem on the Stiefel manifold}
\revise{\[
\min_{X_1,\dots,X_n\in\St(d,r)} \frac1n\sum_{i=1}^n f_i(X_i),
\qquad \text{s.t.}\qquad X_1=\cdots=X_n.
\]}
\revise{where each agent $i\in\{1,\dots,n\}$ has access only to its local smooth objective $f_i$ and communicates over a fixed connected undirected graph. Our aim is to develop a decentralized method that preserves the correction mechanism of EXTRA \cite{shi2015extra} while using a retraction-free update on the Stiefel manifold. This direction is algorithmically attractive because EXTRA-type methods achieve exact decentralized optimization with only local communication, and extending such a communication-efficient mechanism to nonconvex constrained problems on the Stiefel manifold would substantially broaden its scope. It is also computationally appealing because modern GPUs are highly optimized for basic linear algebra subprograms (BLAS) \cite{blackford2002updated}, whereas retraction operations such as repeated QR or polar orthogonalization are typically less GPU-friendly than simple matrix multiplications and additions.} \revise{The main analytical difficulty is that primal-dual methods are already delicate to analyze under nonconvex constraints, where the constraint geometry interacts with stationarity and feasibility in a coupled way. This difficulty becomes sharper for EXTRA-type schemes: although EXTRA admits a primal-dual interpretation, its correction is implemented through an inexact recursive update rather than an exact primal-dual step. Combining such an inexact primal-dual recursion with Stiefel manifold constraints and a retraction-free feasibility mechanism makes the convergence analysis highly nontrivial. This naturally gives rise to the following question:}

\begin{center}
\revise{\emph{Can we design a retraction-free EXTRA algorithm for decentralized optimization over the Stiefel manifold with provable convergence guarantee?}}
\end{center}

\subsection{Related Work}
\label{subsec:related-work}

\subsubsection{Decentralized optimization in Euclidean space}

The development of decentralized optimization began with primal methods such as decentralized gradient descent (DGD) \cite{yuan2016convergence} and decentralized SGD type variants for stochastic objectives \cite{koloskova2021improved,Tang2018D2DT}, which established the basic consensus optimization framework but generally suffer from steady state bias under constant stepsizes. To remove that bias, later primal-dual or correction based methods introduced auxiliary states that couple consensus and optimization more tightly. Representative milestones include gradient tracking and DGT type methods \cite{Nedic2016AchievingGC,Qu2017HarnessingSA,Li2019SDIGingAS,pu2021distributed,Liu2023DecentralizedGT}, EXTRA \cite{shi2015extra}, exact diffusion \cite{yuan2018exact}, NIDS \cite{Li2019NIDS}, and prox-PDA \cite{hong2017prox}. These methods clarified how exactness can be preserved over static networks while maintaining low per-iteration communication cost.

The surrounding Euclidean literature further expanded this line through refined analyses for heterogeneous and nonconvex settings \cite{Yuan2021RemovingDH,alghunaim2022unified,di2016next}, compressed communication \cite{Koloskova2019DecentralizedSO,He2023UnbiasedCS,chen2026greedy,beznosikov2023biased,Qian2020ErrorCD}, time varying and directed networks \cite{Nedic2016AchievingGC,Liang2023UnderstandingTI}, local step variants \cite{Alghunaim2023LocalEF}, and graph design viewpoints \cite{Ying2021ExponentialGI}. \revise{These Euclidean results provide the algorithmic backbone for RF-EXTRA, but they do not address the feasibility and geometric difficulties introduced by Stiefel manifold constraints; this motivates adapting the EXTRA correction mechanism to the Stiefel manifold setting.}

\subsubsection{Distributed optimization on the Stiefel manifold}

\revise{Decentralized optimization over the Stiefel manifold has attracted growing attention.} DRSGD and DRGTA \cite{Chen2021DecentralizedRG} \revise{study} decentralized Riemannian gradient descent on the Stiefel manifold and \revise{show} that distributed PCA type tasks can be handled directly in the manifold setting. More recently, DPRGD \cite{Deng2025DecenProjRG} and DPRGT \cite{Wang2025DecenPGTRiem} study projected decentralized Riemannian gradient updates and decentralized proximal gradient tracking on Riemannian manifolds, respectively; together they provide our main projection based and tracking based baselines. Riemannian EXTRA (REXTRA) \cite{Wu2025RiemannianEC} further demonstrates that EXTRA type primal-dual correction can be extended from Euclidean space to compact submanifolds, but still through retraction or projection based updates.

Related works further considered quantization and compression \cite{Chen2025QuantizedRGT,Hu2024SingleStepCompression}, stochastic recursive momentum \cite{Deng2025DecenProjRSG}, conjugate gradient and natural gradient directions \cite{Chen2023DRCG,Hu2025DRNG}, local linear consensus behavior on the Stiefel manifold \cite{Chen2023LocalLinearStiefel}, manifold consensus formulations \cite{Tron2012RiemannianConsensus,Sarlette2009ConsensusManifolds,Hu2025LocalConsensus}, and decentralized retraction-free optimization on the Stiefel manifold \cite{Sun2024GlobalDecenRFStiefel}. \revise{However, these methods either rely on retractions/projections or do not provide an EXTRA type correction in the retraction-free Stiefel manifold regime, leaving open the question of whether one can obtain both retraction-free updates and primal-dual correction.}

\subsubsection{\texorpdfstring{\revise{Retraction-free optimization on the Stiefel manifold}}{Retraction-free optimization on the Stiefel manifold}}

\revise{A separate line of work studies Stiefel manifold optimization without performing explicit retractions at every iteration. For decentralized Stiefel manifold optimization, DESTINY \cite{Wang2022StiefelAL} is an early retraction-free method that uses an approximate augmented Lagrangian formulation to control Stiefel manifold feasibility through a penalty-based surrogate rather than explicit retractions. Building on the retraction-free perspective, DRFGT \cite{Sun2024GlobalDecenRFStiefel} further develops a decentralized gradient-tracking method with convergence guarantees for nonconvex Stiefel manifold problems. These works demonstrate that retraction-free updates can be made effective in decentralized Stiefel manifold optimization. However, existing retraction-free decentralized methods are still primal or gradient-tracking schemes. How to combine retraction-free Stiefel manifold feasibility control with a single-communication-step, EXTRA-type primal-dual correction mechanism remains largely unexplored.}

\revise{In the centralized setting, retraction-free and infeasible methods have also shown that one can reduce repeated QR or polar decompositions while still controlling feasibility drift \cite{ablin2022fast,gao2019parallelizable,Xiao2024DissolvingConstraints}. Several of these methods can be understood through approximate gradient mappings or penalty-type reformulations for Stiefel manifold optimization, which replace exact projection or retraction steps with cheaper ambient-space updates. 
This motivates the present work: by embedding a retraction-free Stiefel manifold update into an EXTRA recursion, we aim to bring the computational advantage of retraction-free methods to a single-communication-step primal-dual decentralized algorithm with provable convergence.}

\subsection{\texorpdfstring{\revise{Contribution}}{Contribution}}
\label{subsec:contribution}

This paper combines decentralized primal-dual correction, distributed optimization on the Stiefel manifold, and \revise{retraction-free Stiefel manifold feasibility control} in a single scheme. The key idea is to embed an approximate gradient mapping for \revise{Stiefel manifold optimization} into an EXTRA recursion.

Our contributions are summarized as follows.
\begin{itemize}
    \item We design and implement RF-EXTRA, a distributed retraction-free EXTRA algorithm that combines an approximate gradient mapping for \revise{Stiefel manifold optimization} with an EXTRA-based decentralized recursion on the Stiefel manifold, and we validate this design through a systematic numerical study on PCA and LRMC over the Stiefel manifold.
    \item On the theory side, we show that the joint-error variables $(\mathbf X_k-\overline{\mathbf X}_k,\mathbf s_k-\overline{\mathbf s}_k)$ remain in a controllable region and satisfy a contractive recursion under an equivalent norm. This allows us to establish a summed joint-error bound, compare the different terms in the descent analysis of the surrogate function, and finally derive an exact $\mathcal O(1/K)$ convergence rate under a constant stepsize. In contrast to existing retraction-based EXTRA-type methods such as REXTRA \cite{Wu2025RiemannianEC} and existing retraction-free schemes without EXTRA correction, our analysis must couple retraction-free feasibility control with an EXTRA-type inexact primal-dual recursion; this combination is nontrivial because of the manifold nonconvexity, especially under a constant stepsize, as also noted in \cite{Wu2025RiemannianEC}.
\end{itemize}

\subsection{\texorpdfstring{\revise{Organization}}{Organization}}
\label{subsec:organization}

\revise{The rest of this paper is organized as follows. Section~\ref{sec:method} presents RF-EXTRA and the approximate gradient mapping for Stiefel manifold optimization. Section~\ref{sec:theory} establishes the convergence theory and proves the exact $\mathcal O(1/K)$ convergence rate. Section~\ref{sec:experiments} reports numerical experiments on decentralized PCA and decentralized low-rank matrix completion. Section~\ref{sec:conclusion} concludes the paper.}

\subsection{Notation}
\label{subsec:notation}

Throughout the theoretical development, we use $n$ for the number of agents and $X\in\R^{d\times r}$ for the matrix variable on the Stiefel manifold. We write
\[
\mathbf X_k=[X_{1,k}^\top,\dots,X_{n,k}^\top]^\top,
\qquad
\mathbf s_k=[s_{1,k}^\top,\dots,s_{n,k}^\top]^\top
\]
for the stacked primal and auxiliary variables, and
\[
\bar X_k:=\frac1n\sum_{i=1}^n X_{i,k},
\qquad
\bar s_k:=\frac1n\sum_{i=1}^n s_{i,k}
\]
for their averages. The consensus projector is
\[
J:=\frac1n\mathbf 1_n\mathbf 1_n^\top\otimes I_d,
\]
and we use
\[
\overline{\mathbf X}_k:=(\mathbf 1_n\otimes I_d)\bar X_k,
\qquad
\overline{\mathbf s}_k:=(\mathbf 1_n\otimes I_d)\bar s_k.
\]
We also use the lifted mixing matrices
\[
\mathbf W:=W\otimes I_d,
\qquad
\mathbf V:=V\otimes I_d,
\]
so the stacked block recursions are written directly in terms of $\mathbf W$ and $\mathbf V$. The stacked Euclidean gradients are denoted by
\[
\nabla \mathbf f(\mathbf X):=[\nabla f_1(X_1)^\top,\dots,\nabla f_n(X_n)^\top]^\top.
\]
Finally, $\|\cdot\|_F$ denotes the Frobenius norm, and we use $[n]:=\{1,2,3,\cdots, n\}$.

\section{A Retraction-Free EXTRA Method for Decentralized Optimization}
\label{sec:method}

\revise{We consider the decentralized optimization problem on the Stiefel manifold introduced in Section~\ref{sec:introduction}, where each agent $i\in\{1,\dots,n\}$ maintains a local variable $X_{i,k}\in\R^{d\times r}$ and has access only to its local objective $f_i$. Under the consensus constraint $X_1=\cdots=X_n$, this problem reduces to minimizing the averaged objective}
\[
\revise{f(X):=\frac1n\sum_{i=1}^n f_i(X)}
\]
\revise{over $\St(d,r)$. Let $W\in\R^{n\times n}$ be a symmetric doubly stochastic mixing matrix associated with a connected undirected graph, and let}
\[
V=\theta I_n+(1-\theta)W,
\qquad
\theta\in(0,\frac12].
\]
We use the lifted mixing matrices $\mathbf W$ and $\mathbf V$ introduced in the notation section. Our construction starts from the Euclidean correction philosophy of EXTRA and then replaces the Euclidean gradient with a surrogate compatible with the geometry of the Stiefel manifold.

For unconstrained consensus optimization, EXTRA corrects the steady state bias of decentralized gradient descent by coupling the primal recursion with a memory term. An equivalent two-state representation of the Euclidean mechanism can be written as
\begin{equation}
\label{eq:extra-state-primal}
\mathbf X_{k+1}=\mathbf W\mathbf X_k+\mathbf s_k,
\end{equation}
\begin{equation}
\label{eq:extra-state-aux-generic}
\mathbf s_{k+1}=\mathbf s_k+(\mathbf W-\mathbf V)\mathbf X_k-\alpha\bigl(\nabla \mathbf f(\mathbf X_{k+1})-\nabla \mathbf f(\mathbf X_k)\bigr),
\end{equation}
where $\nabla \mathbf f(\mathbf X_k)$ is defined in the notation section. This representation makes the role of the auxiliary state explicit: the matrix $\mathbf W-\mathbf V$ handles the consensus correction, while the gradient difference drives descent without introducing a persistent bias under heterogeneity \cite{shi2015extra,yuan2018exact}. Our goal is to retain this correction architecture in the setting of the Stiefel manifold.

\revise{To transfer the approximate augmented Lagrangian idea of DESTINY into the EXTRA framework, we next introduce the global surrogate functions associated with the averaged objective $f$, namely}
\[
\revise{b(X):=\frac14\|X^\top X-I_r\|_F^2,}
\qquad
\revise{g(X):=\frac32 f(X)-\frac12 f(XX^\top X),}
\qquad
\revise{h(X):=g(X)+\beta b(X).}
\]
Here $h$ plays the role of an approximate augmented Lagrangian in ambient space. Its gradient is
\[
\nabla h(X)=\nabla g(X)+\beta X(X^\top X-I_r),
\]
where
\[
\nabla g(X)=\frac32\nabla f(X)-\frac12\nabla f(XX^\top X)X^\top X-X\,\sym\!\bigl(X^\top\nabla f(XX^\top X)\bigr).
\]
The important point is that the exact gradient $\nabla g(X)$ uses the gradient of $f$ at two different points, namely $X$ and $XX^\top X$. Thus, a direct implementation of $\nabla h(X)$ would require two gradient evaluations within one iteration. To avoid this extra cost, we replace $\nabla g(X)$ by the approximate gradient mapping
\[
G(X)=\nabla f(XX^\top X)\frac{3I_r-X^\top X}{2}-X\,\sym\!\bigl(X^\top\nabla f(XX^\top X)\bigr),
\]
and define
\[
H(X)=G(X)+\beta X(X^\top X-I_r).
\]
With this choice, both $G(X)$ and $H(X)$ are built from a single gradient evaluation at the projected point $XX^\top X$. This is precisely why we use the present $G$ and $H$: each iteration only needs one evaluation of $\nabla f(XX^\top X)$ rather than separate evaluations at $X$ and $XX^\top X$.

We now introduce the corresponding local maps used by the decentralized algorithm. For each agent, define
\[
G_i(X)=\nabla f_i(XX^\top X)\frac{3I_r-X^\top X}{2}-X\,\sym\!\bigl(X^\top\nabla f_i(XX^\top X)\bigr),
\qquad
H_i(X)=G_i(X)+\beta X(X^\top X-I_r).
\]
The first term provides an ambient descent surrogate tailored to the geometry of the Stiefel manifold, while the penalty term controls orthogonality violation. This construction lets the geometric information enter through the local map $H_i$ without changing the linear communication structure of EXTRA. For the averaged analysis, we use the corresponding global maps $G$ and $H$ defined above, together with
\[
\mathbf H_k:=[H_1(X_{1,k})^\top,\dots,H_n(X_{n,k})^\top]^\top,
\qquad
\bar H_k:=\frac1n\sum_{i=1}^n H_i(X_{i,k}).
\]

The resulting algorithm, called RF-EXTRA, is stated in Algorithm~\ref{alg:rfextra}. The initialization follows the EXTRA mechanism and uses the local map $H_i$ evaluated at the initial point.

\begin{algorithm}[t]
\caption{RF-EXTRA}
\label{alg:rfextra}
\begin{algorithmic}[1]
\Require Mixing matrix $W$, correction matrix $V=\theta I_n+(1-\theta)W$ with $\theta\in(0,1/2]$, stepsize $\alpha>0$, penalty parameter $\beta>0$, initial matrices $X_{i,0}\in\R^{d\times r}$ for $i=1,\dots,n$
\For{each agent $i=1,\dots,n$}
    \State Compute $H_i(X_{i,0})$
    \State Set $s_{i,0}=-\alpha H_i(X_{i,0})$
\EndFor
\For{$k=0,1,2,\dots$}
    \For{each agent $i=1,\dots,n$ in parallel}
        \State Exchange $X_{j,k}$ with neighbors $j$
        \State Update the primal variable:
        \[
        X_{i,k+1}=\sum_{j=1}^n w_{ij}X_{j,k}+s_{i,k}
        \]
        \State Compute $H_i(X_{i,k+1})$
        \State Update the auxiliary variable:
        \[
        s_{i,k+1}=s_{i,k}+\sum_{j=1}^n (w_{ij}-v_{ij})X_{j,k}-\alpha\bigl(H_i(X_{i,k+1})-H_i(X_{i,k})\bigr)
        \]
    \EndFor
\EndFor
\end{algorithmic}
\end{algorithm}

In stacked form, Algorithm~\ref{alg:rfextra} reads
\begin{equation}
\label{eq:rfextra-primal}
\mathbf X_{k+1}=\mathbf W\mathbf X_k+\mathbf s_k,
\end{equation}
\begin{equation}
\label{eq:rfextra-aux}
\mathbf s_{k+1}=\mathbf s_k+(\mathbf W-\mathbf V)\mathbf X_k-\alpha(\mathbf H_{k+1}-\mathbf H_k).
\end{equation}
Averaging \eqref{eq:rfextra-primal}--\eqref{eq:rfextra-aux} and using the doubly stochasticity of $W$ and $V$ yields
\[
\bar s_k=-\alpha\bar H_k,
\qquad
\bar X_{k+1}=\bar X_k-\alpha\bar H_k.
\]
This averaged recursion is the key link between RF-EXTRA and the descent analysis developed in Section~\ref{sec:theory}.

RF-EXTRA has two main advantages. First, it preserves the EXTRA mechanism, so the communication layer remains linear and fully decentralized. Second, it avoids explicit retractions at every iteration. Each step requires one neighbor mixing update for $\mathbf X_k$, one evaluation of the local surrogate map $H_i$, and one auxiliary correction update for $\mathbf s_k$. Consequently, the method replaces repeated orthogonalization with a cheaper ambient-space computation while still maintaining effective control of feasibility and consensus through the penalty term and the correction state.

\section{Convergence Analysis}
\label{sec:theory}

This section establishes convergence of the averaged RF-EXTRA iterate. The analysis combines two ingredients: descent properties inherited from the centralized surrogate analysis on the Stiefel manifold and a contractive recursion for the joint error. 

\subsection{Standing assumptions}
\label{subsec:standing-assumptions}

Before stating the assumptions, we collect the constants used throughout the analysis. Let
\[
b(X):=\frac14\|X^\top X-I_r\|_F^2,
\qquad
\mathcal R:=\{X\in\mathbb R^{d\times r}:\|X^\top X-I_r\|_F\le 1/6\},
\]
and define the explicit bounded set
\[
\mathcal B:=\{X\in\mathbb R^{d\times r}:\|X\|_F\le \sqrt{7r/6}+1\}.
\]
We use $L_f$ for a uniform Lipschitz constant of the local gradients on $\mathcal B_f:=\mathcal B\cup\{XX^\top X:X\in\mathcal B\}$,
namely
\[
L_f:=\sup_{i\in [n]}\sup_{X\neq Y\in\mathcal B_f}
\frac{\|\nabla f_i(X)-\nabla f_i(Y)\|_F}{\|X-Y\|_F},
\]
and we define
\[
\revise{L_g:=\sup_{i\in [n]}\sup_{X\neq Y\in\mathcal B}\frac{\|G_i(X)-G_i(Y)\|_F}{\|X-Y\|_F},}
\qquad
L_b:=\sup_{X\neq Y\in\mathcal B}\frac{\|\nabla b(X)-\nabla b(Y)\|_F}{\|X-Y\|_F},
\]
\[
M_g:=\max_{i\in [n]}\sup_{X\in\mathcal R}\|G_i(X)\|_F,
\qquad
C_0:=\max_{i\in [n]}\sup_{X\in\mathcal R}\|\nabla f_i(XX^\top X)\|_F,
\]
and the shorthand
\[
L_H:=L_g+\beta L_b,
\qquad
\revise{L_h:=\sup_{X\neq Y\in\mathcal B}\frac{\|\nabla h(X)-\nabla h(Y)\|_F}{\|X-Y\|_F},}
\qquad
M_H:=\max_{i\in [n]}\sup_{X\in\mathcal R}\|H_i(X)\|_F.
\]
Since $f$, $G$, and $H$ are the averages of $f_i$, $G_i$, and $H_i$, respectively, these constants also bound the corresponding averaged quantities on $\mathcal R$.

We collect the assumptions used in the convergence analysis.

\begin{assumption}[Smooth local models]
\label{ass:smooth}
\revise{Each $\nabla f_i$ is Lipschitz continuous, and the constants $L_f$, $L_g$, $L_b$, $M_g$, $C_0$, $L_H$, $L_h$, and $M_H$ defined above are finite on the indicated sets.}
\end{assumption}

\begin{assumption}[Network]
\label{ass:network}
The mixing matrix $W$ is symmetric and doubly stochastic, the communication graph is connected, and the second largest singular value satisfies $\sigma_2(W)<1$.
\end{assumption}

\subsection{\texorpdfstring{\revise{Averaged neighborhood preservation}}{Averaged neighborhood preservation}}
\label{subsec:averaged-sequence}

\revise{Similar to \cite[Lemma~2]{Wang2022StiefelAL}, the next lemma records the two properties imported from the centralized analysis: coercivity of the surrogate map on the neighborhood of interest and one-step propagation of the averaged iterate inside that neighborhood.}

\begin{lemma}[\revise{Averaged neighborhood preservation}]
\label{lem:destiny-ingredients}
If
\[
\beta\ge \frac{6+21C_0}{5},
\]
then, for every $X\in\mathcal R$,
\begin{equation}
\label{eq:destiny-coercivity-rfextra}
\|H(X)\|_F^2\ge \|G(X)\|_F^2+\beta\|X^\top X-I_r\|_F^2.
\end{equation}
Moreover, if
\[
\bar X_k\in\mathcal R,
\qquad
\|\mathbf X_k-\overline{\mathbf X}_k\|_F\le \min\left\{1,\frac{\sqrt n}{L_H}\right\},
\qquad
\alpha\le \min\left\{\frac{1}{4L_H},\,\frac{1}{4\beta L_b},\,\frac{1}{4\beta}\right\},
\]
and
\[
\beta\ge 12\sqrt2\left(M_g+1\right),
\]
then
\[
\bar X_{k+1}\in\mathcal R.
\]
\end{lemma}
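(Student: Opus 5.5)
Both parts follow the centralized DESTINY analysis \cite[Lemma~2]{Wang2022StiefelAL}. The coercivity part is a pointwise algebraic estimate. The propagation part treats $\bar X_{k+1}=\bar X_k-\alpha\bar H_k$ as a perturbed centralized step $\bar X_k-\alpha H(\bar X_k)$, with perturbation controlled by the consensus error.

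\emph{Coercivity.} Write $E:=X^\top X-I_r$, so that $H(X)=G(X)+\beta XE$ and
\[
\|H(X)\|_F^2=\|G(X)\|_F^2+2\beta\langle G(X),XE\rangle+\beta^2\|XE\|_F^2 .
\]
Set $D:=\nabla f(XX^\top X)$ and $S:=\sym(X^\top D)$. Then
\[
X^\top G(X)=X^\top D\,\tfrac{3I_r-X^\top X}{2}-X^\top X\,S .
\]
On $\mathcal R$ we have $X^\top X=I_r+E$, hence $(3I_r-X^\top X)/2=I_r-E/2$. The leading terms $X^\top D-S$ form a skew-symmetric matrix, and a skew matrix is orthogonal to the symmetric matrix $E$. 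The pairing $\langle X^\top G(X),E\rangle$ therefore only retains terms that are at least linear in $E$. Using $\|E\|_F\le1/6$ and $\|X^\top D\|_F\le\sqrt{7/6}\,C_0$, these terms are bounded by a multiple of $C_0\|E\|_F^2$. We also have $\sigma_{\min}(X)^2\ge 5/6$, so $\|XE\|_F^2\ge\tfrac56\|E\|_F^2$. Combining,
\[
\|H\|_F^2-\|G\|_F^2\ge \beta\|E\|_F^2\left(\tfrac56\beta-c\,C_0\right),
\]
where $c$ is an explicit constant from the expansion. The threshold $\beta\ge(6+21C_0)/5$ is exactly what makes the bracket at least $1$. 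The main bookkeeping is tracking $c$ so that it matches the constant $21/5$.

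\emph{Propagation.} The perturbation is $\Delta_k:=\bar H_k-H(\bar X_k)=\frac1n\sum_i\bigl(H_i(X_{i,k})-H_i(\bar X_k)\bigr)$. First check that $X_{i,k}$ and $\bar X_k$ lie in $\mathcal B$: on $\mathcal R$, $\|\bar X_k\|_F\le\sqrt{7r/6}$, and $\|X_{i,k}-\bar X_k\|_F\le1$. Lipschitz continuity and Cauchy--Schwarz then give
\[
\|\Delta_k\|_F\le \frac{L_H}{\sqrt n}\|\mathbf X_k-\overline{\mathbf X}_k\|_F\le 1 .
\]
Next write $Y:=\bar X_{k+1}=\bar X_k-\alpha(H(\bar X_k)+\Delta_k)$ and expand
\[
Y^\top Y-I_r=E-\alpha\bigl(\bar X_k^\top(H+\Delta_k)+(H+\Delta_k)^\top\bar X_k\bigr)+\alpha^2(H+\Delta_k)^\top(H+\Delta_k).
\]
The penalty part contributes $-2\alpha\beta\,\sym(\bar X_k^\top\bar X_kE)$, and since $\bar X_k^\top\bar X_k=I_r+E$, this reduces $\|E\|_F$ by a factor of roughly $1-2\alpha\beta\cdot\tfrac56$. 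The remaining first-order terms come from $G$ and $\Delta_k$. They are bounded by $2\alpha\sqrt{7/6}\,(M_g+1)$ in Frobenius norm. The quadratic term is bounded by $\alpha^2(M_g+\beta\|E\|_F\sqrt{7/6}+1)^2$, which is kept small using $\alpha\le 1/(4\beta)$.

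Finally, split into two cases. If $\|E\|_F\le 1/12$, the step size alone keeps $\|Y^\top Y-I_r\|_F\le 1/6$, because the drift of $\bar X$ is $\alpha\|H+\Delta_k\|_F$ with $\alpha\le1/(4\beta)$ and $\beta$ large. If $1/12<\|E\|_F\le1/6$, the contraction $2\alpha\beta\cdot\tfrac56\|E\|_F\ge \alpha\beta\cdot\tfrac{5}{36}$ dominates the drift $2\sqrt{7/6}\,\alpha(M_g+1)$ plus the quadratic remainder, and this is guaranteed by $\beta\ge12\sqrt2(M_g+1)$. In both cases $\bar X_{k+1}\in\mathcal R$.

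The main obstacle is the second case: showing, with the stated constants, that the penalty contraction beats the $G$-drift, the perturbation $\Delta_k$ and the $\alpha^2$ term simultaneously. There I would first bound the quadratic term crudely by $\alpha\cdot\tfrac14(\ldots)$ using $\alpha\beta\le1/4$. This absorbs the quadratic term into the linear-in-$\alpha$ budget before comparing it with $\tfrac{5}{36}\alpha\beta$.
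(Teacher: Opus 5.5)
Your argument is sound and takes a different route from the paper in both parts. For the coercivity estimate the paper does no computation; it cites \cite[Lemma~2]{Wang2022StiefelAL}. Your expansion is a valid self-contained replacement. Once the skew part $X^\top D-S$ drops out against the symmetric $E$, the remainder is $-\tfrac12\langle X^\top DE,E\rangle-\langle ES,E\rangle$, which has size at most $\tfrac32\sqrt{7/6}\,C_0\|E\|_F^2$. So you need $\tfrac56\beta\ge 1+3\sqrt{7/6}\,C_0$. Since $3\sqrt{7/6}\approx 3.24<7/2$, the stated threshold $(6+21C_0)/5$ is sufficient for your constant, though it does not match it exactly.

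For propagation, the paper isolates the penalty-only point $Z_k=\bar X_k-\alpha\beta\nabla b(\bar X_k)$ and computes $Z_k^\top Z_k-I_r$ exactly from the SVD of $\bar X_k$. This absorbs the second-order part of the penalty step into the factor $1-\tfrac53 t$, with $t=\alpha\beta$. The paper then perturbs by $\alpha\xi_k$, where in your notation $\xi_k=-(G(\bar X_k)+\Delta_k)$. The remainder $2\alpha\sqrt{7/6}\|\xi_k\|_F+\alpha^2\|\xi_k\|_F^2$ therefore involves only $\|\xi_k\|_F/\beta\le 1/(12\sqrt2)$. The single inequality $(1-\tfrac53t)\tfrac16+\tfrac{5t}{18}=\tfrac16$ closes the proof with no case split. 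Your linearization $E-2t(E+E^2)$ reaches the same factor $1-\tfrac53t$ without an SVD. The cost is that the penalty's $t^2$ term stays inside $\alpha^2\|H+\Delta_k\|_F^2$, so your remainder contains $\beta^2\|\bar X_kE\|_F^2$, and the bookkeeping becomes tight.

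Concretely, your Case~2 does not close as you phrased it. Suppose you bound the quadratic term using $\|E\|_F\le 1/6$ and compare the total with the contraction evaluated at $\|E\|_F=1/12$. You then need $\tfrac{\sqrt{7/12}}{6}+\tfrac14\bigl(\tfrac{1}{12\sqrt2}+\tfrac{\sqrt{7/6}}{6}\bigr)^2\approx 0.1273+0.0143=0.1416\le\tfrac{5}{36}\approx 0.1389$, which is false. The comparison does hold if you keep the same $\|E\|_F$ on both sides, because the difference is convex in $\|E\|_F$ and negative at both $1/12$ and $1/6$. It is simpler to note that your full bound
\[
\bigl(1-\tfrac53t\bigr)\|E\|_F+t\Bigl[\tfrac{\sqrt{7/12}}{6}+\tfrac14\bigl(\tfrac{1}{12\sqrt2}+\sqrt{7/6}\,\|E\|_F\bigr)^2\Bigr]
\]
is increasing in $\|E\|_F$. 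Evaluating it at $\|E\|_F=1/6$ gives $\tfrac16-t\bigl(\tfrac{5}{18}-0.1416\bigr)\le\tfrac16$. This removes the case split entirely, including your Case~1, which holds only by a small margin (about $0.165$ against $1/6$).
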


\begin{proof}
The coercivity estimate \eqref{eq:destiny-coercivity-rfextra} is exactly the statement of DESTINY Lemma~2 for the same pointwise map
\[
H(X)=G(X)+\beta X(X^\top X-I_r)
\]
on the same region $\mathcal R$; hence it may be cited directly.

For the second claim, let
\[
Q(X):=X^\top X-I_r,
\qquad
b(X):=\frac14\|Q(X)\|_F^2.
\]
Then $\nabla b(X)=XQ(X)$. As before, if
\[
J:=\frac1n\mathbf 1_n\mathbf 1_n^\top\otimes I_d,
\]
then $J\mathbf W=J$, $J\mathbf V=J$, and $J(\mathbf W-\mathbf V)=0$. Applying $J$ to the auxiliary recursion gives
\[
J\mathbf s_{k+1}=J\mathbf s_k-\alpha J(\mathbf H_{k+1}-\mathbf H_k).
\]
Because $s_{i,0}=-\alpha H_i(X_{i,0})$, averaging yields $\bar s_0=-\alpha\bar H_0$, and induction gives
\[
\bar s_k=-\alpha\bar H_k\qquad\text{for all }k\ge0.
\]
Applying $J$ to the primal recursion,
\[
\mathbf X_{k+1}=\mathbf W\mathbf X_k+\mathbf s_k,
\]
and using $J\mathbf W=J$, we obtain
\[
\bar X_{k+1}=\bar X_k+\bar s_k=\bar X_k-\alpha\bar H_k.
\]
Now define
\[
\xi_k:=H(\bar X_k)-\bar H_k-G(\bar X_k).
\]
Since $H(\bar X_k)=G(\bar X_k)+\beta\nabla b(\bar X_k)$, the averaged recursion becomes
\begin{equation}
\label{eq:avg-penalty-step-rfextra}
\bar X_{k+1}=\bar X_k-\alpha\beta\nabla b(\bar X_k)+\alpha\xi_k.
\end{equation}
Moreover,
\[
\revise{\|X_{i,k}-\bar X_k\|_F\le \|\mathbf X_k-\overline{\mathbf X}_k\|_F\le 1,\qquad i=1,\dots,n.}
\]
\revise{Since $\bar X_k\in\mathcal R$, we have $\|\bar X_k^\top\bar X_k-I_r\|_F\le 1/6$, and hence}
\[
\revise{\|\bar X_k\|_F^2=\operatorname{tr}(\bar X_k^\top\bar X_k)
\le \operatorname{tr}(I_r)+\sqrt r\,\|\bar X_k^\top\bar X_k-I_r\|_F
\le r+\frac{\sqrt r}{6}
\le \frac{7r}{6}.}
\]
\revise{Therefore $\|\bar X_k\|_F\le \sqrt{7r/6}$, hence $X_{i,k}\in\mathcal B$ for all $i$. Therefore $H_i$ is $L_H$-Lipschitz at the points used below, and}
\begin{equation}
\label{xikestimation}
\|\xi_k\|_F
\le \|H(\bar X_k)-\bar H_k\|_F+\|G(\bar X_k)\|_F
\le \frac{L_H}{\sqrt n}\|\mathbf X_k-\overline{\mathbf X}_k\|_F+M_g
\le 1+M_g.
\end{equation}
\revise{Because $\bar X_k\in\mathcal R$, we have $\|Q(\bar X_k)\|_F\le 1/6$, so every eigenvalue of $\bar X_k^\top\bar X_k$ lies in $[5/6,7/6]$.}
\revise{We now estimate the feasibility of $\bar X_{k+1}$ directly through the singular values of $\bar X_k$. Let}
\[
\revise{
t:=\alpha\beta,\qquad
\bar X_k=U\Sigma V^\top,\qquad
q_j:=\sigma_j^2-1,}
\]
\revise{where $\Sigma=\operatorname{diag}(\sigma_j)$. Since $\bar X_k\in\mathcal R$, we have $|q_j|\le 1/6$. Define the penalty-only point}
\[
\revise{
Z_k:=\bar X_k-t\nabla b(\bar X_k).
}
\]
\revise{Using $\nabla b(\bar X_k)=U\Sigma(\Sigma^2-I_r)V^\top$, we have}
\[
\revise{
Z_k
=U\operatorname{diag}\bigl(\sigma_j(1-tq_j)\bigr)V^\top .
}
\]
\revise{Therefore}
\begin{equation}
\label{eq:penalty-feasibility-recursion-rfextra}
\revise{
\begin{aligned}
\|Z_k^\top Z_k-I_r\|_F^2
&=\sum_j\left((1+q_j)(1-tq_j)^2-1\right)^2\\
&=\sum_j q_j^2\left(1-2t(1+q_j)+t^2q_j(1+q_j)\right)^2\\
&\le \left(1-\frac53t\right)^2\sum_j q_j^2\\
&=\left(1-\frac53t\right)^2\|\bar X_k^\top\bar X_k-I_r\|_F^2 .
\end{aligned}
}
\end{equation}
\revise{Indeed, the scalar factor in the second line is bounded by $1-5t/3$ for $q_j\in[-1/6,1/6]$ and $t\le1/4$: if $q_j\ge0$, then
$1-2t(1+q_j)+t^2q_j(1+q_j)\le 1-2t\le 1-5t/3$; if $q_j<0$, then
$1-2t(1+q_j)+t^2q_j(1+q_j)\le1-2t(5/6)=1-5t/3$.}

\revise{It remains to include the perturbation $\alpha\xi_k$ in \eqref{eq:avg-penalty-step-rfextra}. Since the singular values of $Z_k$ are $\sigma_j|1-tq_j|$, the bounds $|q_j|\le1/6$ and $t\le1/4$ imply $\|Z_k\|_2\le \sqrt{7/6}$. Hence, using $\bar X_{k+1}=Z_k+\alpha\xi_k$,}
\[
\revise{
\begin{aligned}
\|\bar X_{k+1}^\top\bar X_{k+1}-I_r\|_F
&\le \|Z_k^\top Z_k-I_r\|_F
   +2\alpha\|Z_k\|_2\|\xi_k\|_F+\alpha^2\|\xi_k\|_F^2\\
&\le \left(1-\frac53t\right)\|\bar X_k^\top\bar X_k-I_r\|_F
   +2t\sqrt{\frac76}\frac{\|\xi_k\|_F}{\beta}
   +t^2\left(\frac{\|\xi_k\|_F}{\beta}\right)^2 .
\end{aligned}
}
\]

\revise{Combine the assumed $\beta$ condition and \eqref{xikestimation}, we have}
\[
\revise{
\frac{\|\xi_k\|_F}{\beta}
\le
\frac{1}{12\sqrt2}.}
\]
\revise{Since $\|\bar X_k^\top\bar X_k-I_r\|_F\le1/6$, $t\le1/4$, and}
\[
\revise{
2\sqrt{\frac76}\frac{1}{12\sqrt2}
+\frac{t}{288}
\le \frac16+\frac1{1152}
\le \frac{5}{18},}
\]
\revise{we obtain}
\[
\revise{
\|\bar X_{k+1}^\top\bar X_{k+1}-I_r\|_F
\le \left(1-\frac53t\right)\frac16+\frac{5t}{18}
=\frac16 .}
\]
\revise{Hence $\bar X_{k+1}\in\mathcal R$, as claimed.}
\end{proof}

\subsection{Joint-error recursion in an equivalent norm}
\label{subsec:joint-error}

We next control the disagreement-correction subsystem associated with RF-EXTRA. The key point is that one should not work with the Frobenius operator norm of the linear transition matrix. We therefore begin with a standard equivalent-norm lemma.

\begin{lemma}[\cite{horn2012matrix}]
\label{lem:radius}
For any given $\varepsilon > 0$, there exists an operator norm $\|\cdot\|_S$ (dependent on $A$ and $\varepsilon$) such that $\|A\|_S \leq \rho(A) + \varepsilon$. Consequently, if $\rho(A) < 1$, then there exists an operator norm $\|\cdot\|_S$ for which $\|A\|_S < 1$.
\end{lemma}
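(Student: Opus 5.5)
The plan is the standard Jordan-form construction of a near-optimal vector norm, followed by taking the induced operator norm. Work over $\mathbb C$ with $A\in\mathbb C^{m\times m}$; real matrices are a special case. By the Schur or Jordan decomposition, write $A=P T P^{-1}$ with $T$ upper triangular. Its diagonal consists of the eigenvalues $\lambda_1,\dots,\lambda_m$, and its strictly upper triangular entries are $t_{ij}$.

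For a parameter $\delta>0$, set $D_\delta=\operatorname{diag}(1,\delta,\delta^2,\dots,\delta^{m-1})$. Then $D_\delta^{-1} T D_\delta$ has the same diagonal as $T$, and its $(i,j)$ entry for $j>i$ is $\delta^{j-i}t_{ij}$. Choosing $\delta$ small enough makes the total size of the off-diagonal part at most $\varepsilon$. Concretely, I will use the maximum row sum norm $\|\cdot\|_\infty$. This gives
\[
\|D_\delta^{-1}TD_\delta\|_\infty\le \max_i|\lambda_i|+\delta\max_i\sum_{j>i}|t_{ij}|\le\rho(A)+\varepsilon
\]
for $\delta\le 1$ sufficiently small.

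Next, set $S:=PD_\delta$ and define the vector norm $\|x\|_S:=\|S^{-1}x\|_\infty$. This is a norm because $S$ is invertible. Its induced operator norm satisfies
\[
\|B\|_S=\|S^{-1}BS\|_\infty .
\]
For $B=A$ this equals $\|D_\delta^{-1}TD_\delta\|_\infty\le\rho(A)+\varepsilon$. Since it is an induced norm, it is submultiplicative. That property is what the later joint-error recursion will use.

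For the consequence, if $\rho(A)<1$, choose $\varepsilon=(1-\rho(A))/2$. This gives $\|A\|_S\le(1+\rho(A))/2<1$.

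The only delicate point is the case where $A$ is real, as in the lifted transition matrix of the $(\mathbf X,\mathbf s)$ subsystem, while $P$ is complex. I would handle it by keeping the complex norm and restricting it to real vectors. The restriction is still a norm on $\mathbb R^m$, and the induced bound $\|Ax\|_S\le(\rho(A)+\varepsilon)\|x\|_S$ continues to hold for real $x$. Hence the real operator norm is no larger. This step is routine, so I expect no real obstacle; the statement is the classical result in \cite{horn2012matrix}, and the argument above reproduces it.
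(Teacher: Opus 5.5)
Your proof is correct and follows essentially the same argument as the cited result; the paper gives no proof of its own and only refers to Horn--Johnson. Both use Schur triangularization, a diagonal similarity $D_\delta$ that shrinks the strictly upper triangular part, and the operator norm induced by $x\mapsto\|(PD_\delta)^{-1}x\|$; your maximum row sum norm in place of their maximum column sum norm makes no difference, and your restriction-to-real-vectors remark correctly covers the real transition matrix to which the paper applies the lemma.
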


Under Assumption~\ref{ass:network}, define
\[
P:=\begin{bmatrix}
\mathbf W-J & I\\
\mathbf W-\mathbf V & I-J
\end{bmatrix}.
\]
\revise{We define the joint error by}
\[
\revise{
\mathbf p_k:=
\begin{bmatrix}
\mathbf X_k-\overline{\mathbf X}_k\\[1mm]
\mathbf s_k-\overline{\mathbf s}_k
\end{bmatrix}.}
\]
Using the standard state-space representation of EXTRA \cite{shi2015extra,yuan2018exact}, the RF-EXTRA disagreement state satisfies
\begin{equation}
\label{eq:p-recursion-intro}
\mathbf p_{k+1}=P\mathbf p_k+\alpha\,\boldsymbol\delta_k,
\qquad
\boldsymbol\delta_k:=\begin{bmatrix}
0\\ (J-I)(\mathbf H_{k+1}-\mathbf H_k)
\end{bmatrix}.
\end{equation}
By \cite{qin2025convergence}, the condition $\sigma_2(W)<1$ implies $\rho(P)<1$. Applying Lemma~\ref{lem:radius} to $A=P$, we may therefore fix an operator norm $\|\cdot\|_S$ and constants $c_1,c_2>0$ such that
\[
\|P\|_S=: \sigma<1,
\qquad
c_1\|A\|_F\le \|A\|_S\le c_2\|A\|_F
\]
for all stacked variables $A$ in the joint-error space.

\revise{Lemma~\ref{lem:joint-error-recursion} below establishes that this joint error is contractive under the equivalent norm, up to the perturbation induced by the averaged Stiefel manifold update.}

\begin{lemma}[Joint-error recursion in an equivalent norm]
\label{lem:joint-error-recursion}
Recall the stacked quantities $\mathbf p_k$, $\mathbf H_k$, and $\sigma$ introduced earlier. Let $c_1,c_2>0$ satisfy
\[
c_1\|A\|_F\le \|A\|_S\le c_2\|A\|_F.
\]
\revise{Let}
\[
\revise{
a:=\frac{1+\sigma}{2},
\qquad
B_H:=c_2L_H\sqrt n.}
\]
\revise{Assume that, for some $\delta_S>0$,}
\begin{align*}
\revise{\bar X_k\in\mathcal R,\qquad
\|\mathbf p_k\|_S\le \delta_S,\qquad
\delta_S\le c_1\min\left\{\frac{1}{1+\sigma_2(W)},\frac{\sqrt n}{L_H}\right\},}\\
\revise{\alpha\le \min\left\{\frac1{4L_H},\frac1{4\beta L_b},\frac1{4\beta},\frac{c_1(1-\sigma)}{8c_2L_H},1\right\},\qquad
\beta\ge 12\sqrt2\left(M_g+1\right).}
\end{align*}
\revise{Then $\bar X_{k+1}\in\mathcal R$, $X_{i,k}, X_{i,k+1}\in\mathcal B$ for all $i$ and}
\begin{equation}
\label{eq:joint-recursion-final}
\|\mathbf p_{k+1}\|_S\le a\|\mathbf p_k\|_S+\alpha^2 B_H\|H(\bar X_k)\|_F.
\end{equation}
\end{lemma}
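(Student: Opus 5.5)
Throughout we work from the recursion \eqref{eq:p-recursion-intro} together with $\|P\|_S=\sigma$; the other conclusions come from Lemma~\ref{lem:destiny-ingredients}.

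\textbf{Step 1: membership in $\mathcal R$ and $\mathcal B$.} From $\|\mathbf p_k\|_S\le\delta_S$ and the norm equivalence we get
\[
\|\mathbf X_k-\overline{\mathbf X}_k\|_F\le\|\mathbf p_k\|_F\le \delta_S/c_1\le\min\{1,\sqrt n/L_H\},
\]
using $1/(1+\sigma_2(W))\le1$. The stepsize and penalty conditions of Lemma~\ref{lem:destiny-ingredients} are also implied by the present hypotheses. That lemma therefore gives $\bar X_{k+1}\in\mathcal R$, and its proof already shows $X_{i,k}\in\mathcal B$. For $X_{i,k+1}$ we use $\|\bar X_{k+1}\|_F\le\sqrt{7r/6}$, which holds since $\bar X_{k+1}\in\mathcal R$. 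So it suffices to show $\|X_{i,k+1}-\bar X_{k+1}\|_F\le1$. The first block of \eqref{eq:p-recursion-intro} has no forcing term, so
\[
\mathbf X_{k+1}-\overline{\mathbf X}_{k+1}=(\mathbf W-J)(\mathbf X_k-\overline{\mathbf X}_k)+(\mathbf s_k-\overline{\mathbf s}_k),
\]
whose Frobenius norm is at most $\sigma_2(W)\|\mathbf p_k\|_F+\|\mathbf p_k\|_F\le(1+\sigma_2(W))\delta_S/c_1\le1$. This is exactly why the factor $1/(1+\sigma_2(W))$ appears in the bound on $\delta_S$. Hence all $X_{i,k+1}\in\mathcal B$, and $H_i$ is $L_H$-Lipschitz between $X_{i,k}$ and $X_{i,k+1}$.

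\textbf{Step 2: bounding the perturbation.} We have
\[
\|\mathbf p_{k+1}\|_S\le\sigma\|\mathbf p_k\|_S+\alpha c_2\|(J-I)(\mathbf H_{k+1}-\mathbf H_k)\|_F\le\sigma\|\mathbf p_k\|_S+\alpha c_2L_H\|\mathbf X_{k+1}-\mathbf X_k\|_F.
\]
Next decompose
\[
\mathbf X_{k+1}-\mathbf X_k=(\mathbf X_{k+1}-\overline{\mathbf X}_{k+1})-(\mathbf X_k-\overline{\mathbf X}_k)+(\overline{\mathbf X}_{k+1}-\overline{\mathbf X}_k).
\]
The first two pieces are together bounded by $(2+\sigma_2(W))\|\mathbf p_k\|_F\le 3\|\mathbf p_k\|_S/c_1$. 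The last piece equals $-\alpha\mathbf 1\otimes\bar H_k$, with norm $\alpha\sqrt n\|\bar H_k\|_F$. Moreover,
\[
\|\bar H_k\|_F\le\|H(\bar X_k)\|_F+\tfrac{L_H}{\sqrt n}\|\mathbf X_k-\overline{\mathbf X}_k\|_F,
\]
where Step 1 ensures all the relevant points lie in $\mathcal B$.

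\textbf{Step 3: collecting terms.} Putting these together,
\[
\|\mathbf p_{k+1}\|_S\le\Bigl(\sigma+\tfrac{\alpha c_2L_H}{c_1}(3+\alpha L_H)\Bigr)\|\mathbf p_k\|_S+\alpha^2c_2L_H\sqrt n\,\|H(\bar X_k)\|_F.
\]
The forcing term is exactly $\alpha^2B_H\|H(\bar X_k)\|_F$. For the coefficient, $\alpha L_H\le1/4$ gives $3+\alpha L_H\le 4$, and $\alpha\le c_1(1-\sigma)/(8c_2L_H)$ gives $4\alpha c_2L_H/c_1\le(1-\sigma)/2$. The coefficient is therefore at most $\sigma+(1-\sigma)/2=a$, which proves \eqref{eq:joint-recursion-final}.

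\textbf{Main obstacle.} The delicate point is the constant bookkeeping. The $\mathbf X_{k+1}$ disagreement must be bounded through the first block of $P$ only, not through $\|P\|_S$, which would introduce $c_2/c_1$ factors. If the crude constant $3$ turns out too large, the fallback is to tighten it with $\|\mathbf W-J\|_2=\sigma_2(W)$ and $\|\mathbf W-\mathbf V\|_2\le1$.
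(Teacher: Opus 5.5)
Your proof is correct and follows essentially the same route as the paper. You use the same membership argument via Lemma~\ref{lem:destiny-ingredients} and the first block of the recursion, with the factor $1+\sigma_2(W)$. Your bound on $\|\mathbf X_{k+1}-\mathbf X_k\|_F$ is the paper's: splitting into the two disagreement terms plus the averaged step $-\alpha(\mathbf 1_n\otimes I_d)\bar H_k$ is algebraically identical to its $(\mathbf W-I)(\mathbf X_k-\overline{\mathbf X}_k)+(\mathbf s_k-\overline{\mathbf s}_k)-\alpha\overline{\mathbf H}_k$. This gives the same coefficient $4c_2L_H/c_1$, which $\alpha\le c_1(1-\sigma)/(8c_2L_H)$ absorbs into $a$.
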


\begin{proof}
Let
\[
\widetilde{\mathbf X}_k:=\mathbf X_k-\overline{\mathbf X}_k,
\qquad
\widetilde{\mathbf s}_k:=\mathbf s_k-\overline{\mathbf s}_k.
\]
From \eqref{eq:p-recursion-intro}, we have
\begin{equation}
\label{eq:p-basic-bound}
\|\mathbf p_{k+1}\|_S\le \sigma\|\mathbf p_k\|_S+\alpha\|\boldsymbol\delta_k\|_S.
\end{equation}
\revise{By norm equivalence and the bound $\|\mathbf p_k\|_S\le \delta_S$,}
\[
\revise{\|\widetilde{\mathbf X}_k\|_F\le \|\mathbf p_k\|_F\le \frac{\delta_S}{c_1},
\qquad
\|\widetilde{\mathbf s}_k\|_F\le \|\mathbf p_k\|_F\le \frac{\delta_S}{c_1}.}
\]
\revise{Since $\bar X_k\in\mathcal R$, we have $\|\bar X_k^\top\bar X_k-I_r\|_F\le 1/6$, and hence}
\[
\revise{\|\bar X_k\|_F^2=\operatorname{tr}(\bar X_k^\top\bar X_k)
\le \operatorname{tr}(I_r)+\sqrt r\,\|\bar X_k^\top\bar X_k-I_r\|_F
\le r+\frac{\sqrt r}{6}
\le \frac{7r}{6}.}
\]
\revise{Therefore $\|\bar X_k\|_F\le \sqrt{7r/6}$. Together with $\delta_S/c_1\le 1/(1+\sigma_2(W))\le 1$, this yields}
\[
\revise{\|X_{i,k}\|_F\le \|\bar X_k\|_F+\|X_{i,k}-\bar X_k\|_F
\le \sqrt{7r/6}+\|\widetilde{\mathbf X}_k\|_F
\le \sqrt{7r/6}+1,\qquad i=1,\dots,n,}
\]
\revise{hence $X_{i,k}\in\mathcal B$ for all $i$.}

\revise{The assumptions imply $\|\mathbf X_k-\overline{\mathbf X}_k\|_F\le\delta_S/c_1\le \min\{1,\sqrt n/L_H\}$, and hence all conditions in the second part of Lemma~\ref{lem:destiny-ingredients} are satisfied. Therefore}
\[
\revise{\bar X_{k+1}\in\mathcal R.}
\]
\revise{Moreover, subtracting the average from the primal recursion yields}
\[
\revise{\widetilde{\mathbf X}_{k+1}=(\mathbf W-J)\widetilde{\mathbf X}_k+\widetilde{\mathbf s}_k.}
\]
\revise{Therefore}
\[
\revise{\|\widetilde{\mathbf X}_{k+1}\|_F
\le \sigma_2(W)\|\widetilde{\mathbf X}_k\|_F+\|\widetilde{\mathbf s}_k\|_F
\le (1+\sigma_2(W))\frac{\delta_S}{c_1}
\le 1.}
\]
\revise{Using again $\bar X_{k+1}\in\mathcal R$, we obtain $\|\bar X_{k+1}\|_F\le \sqrt{7r/6}$ and hence}
\[
\revise{\|X_{i,k+1}\|_F\le \|\bar X_{k+1}\|_F+\|X_{i,k+1}-\bar X_{k+1}\|_F
\le \sqrt{7r/6}+\|\widetilde{\mathbf X}_{k+1}\|_F
\le \sqrt{7r/6}+1,\qquad i=1,\dots,n,}
\]
\revise{so $X_{i,k+1}\in\mathcal B$ for all $i$. Thus every point at which $H_i$ is evaluated in the present step lies in $\mathcal B$.}
Since $J(\mathbf W-\mathbf V)=0$ and $\overline{\mathbf s}_k=-\alpha(\mathbf 1_n\otimes I_d)\bar H_k$, the primal increment satisfies
\[
\mathbf X_{k+1}-\mathbf X_k=(\mathbf W-I)\widetilde{\mathbf X}_k+\widetilde{\mathbf s}_k-\alpha\overline{\mathbf H}_k,
\]
where $\bar H_k:=n^{-1}\sum_{i=1}^n H_i(X_{i,k})$ and $\overline{\mathbf H}_k:=(\mathbf 1_n\otimes I_d)\bar H_k$. Therefore,
\begin{equation}
\label{eq:X-inc-bound}
\|\mathbf X_{k+1}-\mathbf X_k\|_F
\le 2\|\widetilde{\mathbf X}_k\|_F+\|\widetilde{\mathbf s}_k\|_F+\alpha\|\overline{\mathbf H}_k\|_F.
\end{equation}
\revise{Because $X_{i,k},X_{i,k+1}\in\mathcal B$ for every $i$, each $H_i$ is $L_H$-Lipschitz along this step, and therefore}
\[
\|H(\bar X_k)-\bar H_k\|_F
\le \frac{L_H}{n}\sum_{i=1}^n\|\bar X_k-X_{i,k}\|_F
\le \frac{L_H}{\sqrt n}\|\mathbf X_k-\overline{\mathbf X}_k\|_F,
\]
which implies
\begin{equation}
\label{eq:barH-bound-final}
\|\overline{\mathbf H}_k\|_F
\le L_H\|\widetilde{\mathbf X}_k\|_F+\sqrt n\,\|H(\bar X_k)\|_F.
\end{equation}
Substituting \eqref{eq:barH-bound-final} into \eqref{eq:X-inc-bound} and using $\alpha L_H\le1$ give
\[
\|\mathbf X_{k+1}-\mathbf X_k\|_F
\le 3\|\widetilde{\mathbf X}_k\|_F+\|\widetilde{\mathbf s}_k\|_F+\alpha\sqrt n\,\|H(\bar X_k)\|_F
\le 4\|\mathbf p_k\|_F+\alpha\sqrt n\,\|H(\bar X_k)\|_F.
\]
\revise{Therefore we have}
\[
\|\mathbf H_{k+1}-\mathbf H_k\|_F\le L_H\|\mathbf X_{k+1}-\mathbf X_k\|_F
\le 4L_H\|\mathbf p_k\|_F+\alpha L_H\sqrt n\,\|H(\bar X_k)\|_F.
\]
Hence
\[
\|\boldsymbol\delta_k\|_S
\le c_2\|\mathbf H_{k+1}-\mathbf H_k\|_F
\le \frac{4c_2L_H}{c_1}\|\mathbf p_k\|_S+\alpha B_H\|H(\bar X_k)\|_F.
\]
Combining this with \eqref{eq:p-basic-bound} yields
\[
\|\mathbf p_{k+1}\|_S
\le \left(\sigma+\frac{4c_2L_H}{c_1}\alpha\right)\|\mathbf p_k\|_S+\alpha^2B_H\|H(\bar X_k)\|_F.
\]
The restriction $\alpha\le c_1(1-\sigma)/(8c_2L_H)$ gives
\[
\sigma+\frac{4c_2L_H}{c_1}\alpha\le \frac{1+\sigma}{2}=a,
\]
which proves \eqref{eq:joint-recursion-final}.
\end{proof}

\paragraph{Remark.}
A direct Frobenius-norm contraction argument is unavailable for the RF-EXTRA transition matrix $P$. Indeed, if $u\neq0$ satisfies $Ju=0$ and $z=[0;u]^\top$, then $Pz=[u;u]^\top$, so $\|Pz\|_F=\sqrt2\,\|u\|_F>\|z\|_F$. Thus any contraction inequality directly under $\|\cdot\|_F$ is infeasible. This is why the introduction of the $S$-norm is crucial.

\subsection{Neighborhood propagation and boundedness}
\label{subsec:boundedness}

Once the joint-error recursion is available, we propagate the invariant neighborhood.

\begin{lemma}[One-step propagation of the joint neighborhood]
\label{lem:star_inv}
Let $\delta_S>0$, and define
\[
\mathcal N_S(\delta_S):=\{(\mathbf X,\mathbf s):\|\mathbf p\|_S\le \delta_S\}.
\]
Let
\[
B_H:=c_2L_H\sqrt n,
\qquad
a:=\frac{1+\sigma}{2}.
\]
Assume the hypotheses of Lemma~\ref{lem:joint-error-recursion}. If, for some $k\ge0$,
\[
\bar X_k\in\mathcal R,
\qquad
(\mathbf X_k,\mathbf s_k)\in\mathcal N_S(\delta_S),
\qquad
\alpha\le \sqrt{\frac{(1-a)\delta_S}{B_HM_H}},
\]
then
\[
\revise{\bar X_{k+1}\in\mathcal R,
\qquad}
(\mathbf X_{k+1},\mathbf s_{k+1})\in\mathcal N_S(\delta_S).
\]
\end{lemma}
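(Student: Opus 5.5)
The plan is to apply Lemma~\ref{lem:joint-error-recursion} directly and then close the recursion with a uniform bound on $\|H(\bar X_k)\|_F$.

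\textbf{Step 1: invoke the joint-error recursion.} The hypotheses of Lemma~\ref{lem:joint-error-recursion} are assumed, and we are given $\bar X_k\in\mathcal R$ and $\|\mathbf p_k\|_S\le\delta_S$ (the latter is exactly $(\mathbf X_k,\mathbf s_k)\in\mathcal N_S(\delta_S)$). The lemma therefore yields at once
\[
\bar X_{k+1}\in\mathcal R
\qquad\text{and}\qquad
\|\mathbf p_{k+1}\|_S\le a\|\mathbf p_k\|_S+\alpha^2B_H\|H(\bar X_k)\|_F .
\]
This settles the first claim.

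\textbf{Step 2: bound the forcing term.} Since $\bar X_k\in\mathcal R$, the definition of $M_H$ gives $\|H_i(\bar X_k)\|_F\le M_H$ for every $i$. Because $H(\bar X_k)=\frac1n\sum_i H_i(\bar X_k)$, the triangle inequality gives $\|H(\bar X_k)\|_F\le M_H$. This is the point the paper already flagged, namely that the constants bound the averaged quantities on $\mathcal R$.

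\textbf{Step 3: close the invariance.} Combining Steps 1 and 2 with the stepsize condition $\alpha^2\le(1-a)\delta_S/(B_HM_H)$ gives
\[
\|\mathbf p_{k+1}\|_S\le a\delta_S+\alpha^2B_HM_H\le a\delta_S+(1-a)\delta_S=\delta_S .
\]
Hence $(\mathbf X_{k+1},\mathbf s_{k+1})\in\mathcal N_S(\delta_S)$.

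\textbf{Main obstacle.} There is essentially no hard step once Lemma~\ref{lem:joint-error-recursion} is available. The only point needing care is that $H$ is evaluated at the average $\bar X_k$ and not at the local iterates. This is what makes the sup over $\mathcal R$ (via $M_H$) applicable, rather than a sup over $\mathcal B$, which $M_H$ does not control.

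We also need $M_H$ and $B_H$ to be positive so the stepsize condition is meaningful. If $M_H=0$, the forcing term vanishes and the conclusion holds trivially from $a<1$.
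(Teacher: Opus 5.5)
Your proposal is correct and matches the paper's proof step for step. You invoke Lemma~\ref{lem:joint-error-recursion} to get $\bar X_{k+1}\in\mathcal R$ together with the recursion, bound $\|H(\bar X_k)\|_F\le M_H$ using $\bar X_k\in\mathcal R$ and averaging, and then close with $\alpha^2B_HM_H\le(1-a)\delta_S$; your side remarks (why $M_H$ applies at the average, and the degenerate case $M_H=0$) are harmless additions.
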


\begin{proof}
\revise{By Lemma~\ref{lem:joint-error-recursion}, we have $\bar X_{k+1}\in\mathcal R$ and}
\[
\|\mathbf p_{k+1}\|_S\le a\|\mathbf p_k\|_S+\alpha^2B_H\|H(\bar X_k)\|_F.
\]
Since $\bar X_k\in\mathcal R$ and $H(\bar X_k)=n^{-1}\sum_{i=1}^n H_i(\bar X_k)$, the definition of $M_H$ implies
\[
\|H(\bar X_k)\|_F\le M_H.
\]
Using $\|\mathbf p_k\|_S\le \delta_S$, we obtain
\[
\|\mathbf p_{k+1}\|_S\le a\delta_S+\alpha^2B_HM_H.
\]
The stepsize restriction yields
\[
\alpha^2B_HM_H\le (1-a)\delta_S,
\]
hence
\[
\|\mathbf p_{k+1}\|_S\le a\delta_S+(1-a)\delta_S=\delta_S.
\]
Hence $(\mathbf X_{k+1},\mathbf s_{k+1})\in\mathcal N_S(\delta_S)$.
\end{proof}

The recursive bound above also yields a summed estimate for the joint error, which will be used in the final descent argument.

\begin{lemma}[Summed joint-error bound]
\label{lem:joint-error-sum}
\revise{Let $\delta_S>0$ satisfy}
\[
\revise{\delta_S\le c_1\min\left\{\frac{1}{1+\sigma_2(W)},\frac{\sqrt n}{L_H}\right\}.}
\]
\revise{Assume $\bar X_0\in\mathcal R$, $(\mathbf X_0,\mathbf s_0)\in\mathcal N_S(\delta_S)$,}
\[
\revise{\alpha\le \min\left\{\frac1{4L_H},\frac1{4\beta L_b},\frac1{4\beta},\frac{c_1(1-\sigma)}{8c_2L_H},\sqrt{\frac{(1-a)\delta_S}{B_HM_H}},1\right\},}
\]
\revise{and}
\[
\revise{\beta\ge 12\sqrt2\left(M_g+1\right).}
\]
\revise{Then, for every integer $K\ge0$,}
\begin{equation}
\label{eq:joint-sum-final}
\sum_{k=0}^{K}\Bigl(\|\mathbf X_{k+1}-\overline{\mathbf X}_{k+1}\|_F^2+\|\mathbf s_{k+1}-\overline{\mathbf s}_{k+1}\|_F^2\Bigr)
\le C_{P,0}+\alpha^2 C_{P,1}\sum_{k=0}^{K}\|H(\bar X_k)\|_F^2,
\end{equation}
where
\[
C_{P,0}:=\frac{2a^2}{c_1^2(1-a^2)}\|\mathbf p_0\|_S^2,
\qquad
C_{P,1}:=\frac{2n c_2^2L_H^2}{c_1^2(1-a)^2}.
\]
\end{lemma}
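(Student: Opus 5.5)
The argument has two stages. First, an induction using Lemma~\ref{lem:star_inv} shows that the hypotheses of Lemma~\ref{lem:joint-error-recursion} hold at every iteration. Second, the linear recursion \eqref{eq:joint-recursion-final} is unrolled, squared and summed.

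\textbf{Invariance.} The claim is that $\bar X_k\in\mathcal R$ and $(\mathbf X_k,\mathbf s_k)\in\mathcal N_S(\delta_S)$ for all $k\ge0$. The base case is assumed. For the inductive step, the stepsize bound in the statement contains every condition required by Lemma~\ref{lem:joint-error-recursion}, together with $\alpha\le\sqrt{(1-a)\delta_S/(B_HM_H)}$. The bounds on $\delta_S$ and $\beta$ are also exactly those of that lemma. Lemma~\ref{lem:star_inv} therefore carries both memberships from $k$ to $k+1$. Consequently \eqref{eq:joint-recursion-final} holds for every $k$:
\[
\|\mathbf p_{k+1}\|_S\le a\|\mathbf p_k\|_S+\alpha^2B_H\|H(\bar X_k)\|_F .
\]

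\textbf{Unrolling and summing.} Unrolling the recursion gives
\[
\|\mathbf p_{k+1}\|_S\le a^{k+1}\|\mathbf p_0\|_S+\alpha^2B_H\sum_{j=0}^{k}a^{k-j}\|H(\bar X_j)\|_F .
\]
Apply $(x+y)^2\le 2x^2+2y^2$ to this bound. The first square is $2a^{2(k+1)}\|\mathbf p_0\|_S^2$, and summing over $k=0,\dots,K$ gives at most $\frac{2a^2}{1-a^2}\|\mathbf p_0\|_S^2$.

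For the second square, use Cauchy--Schwarz with weights $a^{k-j}$:
\[
\Bigl(\sum_{j=0}^{k} a^{k-j}u_j\Bigr)^2\le \frac{1}{1-a}\sum_{j=0}^{k} a^{k-j}u_j^2 .
\]
Summing over $k$ and exchanging the order of summation contributes another factor $\frac{1}{1-a}$. The result is
\[
\sum_{k=0}^{K}\Bigl(\sum_{j=0}^{k}a^{k-j}u_j\Bigr)^2\le \frac{1}{(1-a)^2}\sum_{j=0}^{K}u_j^2,\qquad u_j=\|H(\bar X_j)\|_F .
\]
This yields the term $\frac{2\alpha^4B_H^2}{(1-a)^2}\sum_{k=0}^{K}\|H(\bar X_k)\|_F^2$.

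\textbf{Converting to Frobenius norm.} The left side of \eqref{eq:joint-sum-final} equals $\sum_{k=0}^{K}\|\mathbf p_{k+1}\|_F^2$, and $\|\mathbf p_{k+1}\|_F^2\le c_1^{-2}\|\mathbf p_{k+1}\|_S^2$. Dividing the previous bounds by $c_1^2$ gives the constant $C_{P,0}$ exactly. The $H$-term becomes $\frac{2\alpha^4 n c_2^2L_H^2}{c_1^2(1-a)^2}=\alpha^4C_{P,1}$. Since $\alpha\le1$, we have $\alpha^4\le\alpha^2$, which gives the stated form $\alpha^2C_{P,1}$.

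The only genuinely delicate point is the invariance induction, because the recursion is valid only while the iterates stay in $\mathcal R$ and $\mathcal N_S(\delta_S)$. Once that is secured, the rest is a routine discrete Young/convolution estimate.
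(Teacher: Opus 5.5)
Your proposal is correct and follows essentially the same route as the paper's proof: invariance of $\mathcal R$ and $\mathcal N_S(\delta_S)$ by repeated application of Lemma~\ref{lem:star_inv}, then unrolling \eqref{eq:joint-recursion-final}, splitting the square with $(x+y)^2\le 2x^2+2y^2$, and summing the geometric part. The remaining steps also match: weighted Cauchy--Schwarz with an exchange of summation for the convolution term, conversion to Frobenius norm via $c_1$, and $\alpha^4\le\alpha^2$ from $\alpha\le1$.
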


\begin{proof}
\revise{Repeated application of Lemma~\ref{lem:star_inv} gives $\bar X_k\in\mathcal R$ and $(\mathbf X_k,\mathbf s_k)\in\mathcal N_S(\delta_S)$ for all $k\ge0$. Hence Lemma~\ref{lem:joint-error-recursion} applies at every iteration.}
Unrolling \eqref{eq:joint-recursion-final} gives
\[
\|\mathbf p_{k+1}\|_S\le a^{k+1}\|\mathbf p_0\|_S+\alpha^2B_H\sum_{t=0}^k a^{k-t}\|H(\bar X_t)\|_F.
\]
Set $h_t:=\|H(\bar X_t)\|_F$. Using $(u+v)^2\le 2u^2+2v^2$, we obtain
\[
\|\mathbf p_{k+1}\|_S^2
\le 2a^{2k+2}\|\mathbf p_0\|_S^2
+2\alpha^4B_H^2\left(\sum_{t=0}^k a^{k-t}h_t\right)^2.
\]
Summing over $k=0,\dots,K$ yields
\[
\sum_{k=0}^{K}\|\mathbf p_{k+1}\|_S^2
\le 2\|\mathbf p_0\|_S^2\sum_{k=0}^{K}a^{2k+2}
+2\alpha^4B_H^2\sum_{k=0}^{K}\left(\sum_{t=0}^k a^{k-t}h_t\right)^2.
\]
The geometric-series bound gives
\[
\sum_{k=0}^{K}a^{2k+2}\le \frac{a^2}{1-a^2}.
\]
For the second term, Cauchy--Schwarz implies
\[
\left(\sum_{t=0}^k a^{k-t}h_t\right)^2
\le \left(\sum_{t=0}^k a^{k-t}\right)\left(\sum_{t=0}^k a^{k-t}h_t^2\right)
\le \frac{1}{1-a}\sum_{t=0}^k a^{k-t}h_t^2.
\]
Summing over $k$ and exchanging the order of summation,
\[
\sum_{k=0}^{K}\left(\sum_{t=0}^k a^{k-t}h_t\right)^2
\le \frac{1}{1-a}\sum_{t=0}^{K}h_t^2\sum_{k=t}^{K}a^{k-t}
\le \frac{1}{(1-a)^2}\sum_{t=0}^{K}h_t^2.
\]
Therefore,
\[
\sum_{k=0}^{K}\|\mathbf p_{k+1}\|_S^2
\le \frac{2a^2}{1-a^2}\|\mathbf p_0\|_S^2+
\frac{2\alpha^4B_H^2}{(1-a)^2}\sum_{k=0}^{K}\|H(\bar X_k)\|_F^2.
\]
Finally, norm equivalence gives
\[
\|\mathbf X_{k+1}-\overline{\mathbf X}_{k+1}\|_F^2+\|\mathbf s_{k+1}-\overline{\mathbf s}_{k+1}\|_F^2
\le \frac1{c_1^2}\|\mathbf p_{k+1}\|_S^2,
\]
and, since $\alpha\le1$ and $B_H=c_2L_H\sqrt n$,
\[
\sum_{k=0}^{K}\Bigl(\|\mathbf X_{k+1}-\overline{\mathbf X}_{k+1}\|_F^2+\|\mathbf s_{k+1}-\overline{\mathbf s}_{k+1}\|_F^2\Bigr)
\le \frac{2a^2}{c_1^2(1-a^2)}\|\mathbf p_0\|_S^2
+\alpha^2\frac{2n c_2^2L_H^2}{c_1^2(1-a)^2}\sum_{k=0}^{K}\|H(\bar X_k)\|_F^2.
\]
This is exactly \eqref{eq:joint-sum-final}.
\end{proof}

\subsection{Exact O(1/K) convergence rate}
\label{subsec:rate}

We now combine averaged descent with the summed joint-error bound. Summing the descent inequality along the averaged trajectory and comparing the different terms through Lemma~\ref{lem:joint-error-sum} yields an exact $\mathcal O(1/K)$ convergence rate. 

\begin{lemma}[Descent lemma for the averaged RF-EXTRA sequence]
\label{prop:rf-extra-descent}
Let $\bar H_k:=n^{-1}\sum_{i=1}^n H_i(X_{i,k})$, let $Q_k:=\bar X_k^\top \bar X_k-I_r$. Suppose Assumption~\ref{ass:smooth} holds, suppose moreover that \revise{$X_{i,k}\in \mathcal{B}$ for all $i=1,\dots,n$ and $\bar X_k\in\mathcal R$. Assume further that the remaining hypotheses in the second part of Lemma~\ref{lem:destiny-ingredients}, including its parameter assumptions, are satisfied at iteration $k$.} Assume \revise{$\beta\ge 56L_f^2$}, and let
$\revise{\alpha\le \min\left\{\frac{3}{8L_h},\frac{1}{4L_H}\right\},}$ then we have
\begin{equation}
\label{eq:descent-H-form}
h(\bar X_{k+1})
\le h(\bar X_k)-\revise{\frac{3\alpha}{16}}\|H(\bar X_k)\|_F^2+\Gamma_X(\alpha)\|\mathbf X_k-\overline{\mathbf X}_k\|_F^2,
\end{equation}
where
\[
\revise{\Gamma_X(\alpha):=\frac{17\alpha L_H^2}{8n}+\frac{\alpha^2L_hL_H^2}{n}}\le \Gamma_X^\sharp:=\frac{5L_H}{8n}.
\]
\end{lemma}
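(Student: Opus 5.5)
The plan is to run a standard smoothness-based descent argument for $h$ along the averaged recursion $\bar X_{k+1}=\bar X_k-\alpha\bar H_k$ from Lemma~\ref{lem:destiny-ingredients}. The correlation between $\nabla h(\bar X_k)$ and $H(\bar X_k)$ comes from the coercivity estimate \eqref{eq:destiny-coercivity-rfextra}, and the disagreement enters only through $\bar H_k-H(\bar X_k)$.

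\emph{Step 1 (smoothness step).} By the second part of Lemma~\ref{lem:destiny-ingredients}, both $\bar X_k$ and $\bar X_{k+1}$ lie in $\mathcal R$. The computation in that lemma shows $\mathcal R\subset\mathcal B$, and $\mathcal B$ is convex, so the segment joining them lies in $\mathcal B$. There $\nabla h$ is $L_h$-Lipschitz, which gives
\[
h(\bar X_{k+1})\le h(\bar X_k)-\alpha\langle\nabla h(\bar X_k),\bar H_k\rangle+\tfrac{L_h\alpha^2}{2}\|\bar H_k\|_F^2 .
\]
Write $\bar H_k=H(\bar X_k)+e_k$. As in the proof of Lemma~\ref{lem:joint-error-recursion}, $X_{i,k}\in\mathcal B$ gives
\[
\|e_k\|_F\le \tfrac{L_H}{\sqrt n}\|\mathbf X_k-\overline{\mathbf X}_k\|_F .
\]

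\emph{Step 2 (gradient mismatch).} This is the key structural identity. With $Y=XX^\top X$, a direct comparison of the formulas for $\nabla g$ and $G$ shows that the $\nabla f(Y)$-terms cancel exactly, leaving
\[
\nabla h(X)-H(X)=\nabla g(X)-G(X)=\tfrac32\bigl(\nabla f(X)-\nabla f(XX^\top X)\bigr).
\]
Since $X-XX^\top X=-X(X^\top X-I_r)$ and $\|X\|_2\le\sqrt{7/6}$ on $\mathcal R$, this yields
\[
\|\nabla h(\bar X_k)-H(\bar X_k)\|_F\le \tfrac32\sqrt{7/6}\,L_f\|Q_k\|_F .
\]
The coercivity bound \eqref{eq:destiny-coercivity-rfextra} gives $\|Q_k\|_F\le\|H(\bar X_k)\|_F/\sqrt\beta$. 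With $\beta\ge56L_f^2$ the mismatch is therefore at most $c\|H(\bar X_k)\|_F$ with $c=\tfrac32\sqrt{7/6}/\sqrt{56}<0.22$. Hence
\[
\langle\nabla h(\bar X_k),H(\bar X_k)\rangle\ge(1-c)\|H(\bar X_k)\|_F^2
\quad\text{and}\quad
\|\nabla h(\bar X_k)\|_F\le(1+c)\|H(\bar X_k)\|_F .
\]

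\emph{Step 3 (assembling).} Expand the inner product as
\[
\langle\nabla h,H\rangle+\langle\nabla h,e_k\rangle .
\]
Bound the cross term by Young's inequality,
\[
\alpha|\langle\nabla h,e_k\rangle|\le \tfrac{\alpha}{4}\|H\|_F^2+\alpha(1+c)^2\|e_k\|_F^2 .
\]
Also use $\|\bar H_k\|_F^2\le2\|H\|_F^2+2\|e_k\|_F^2$ together with $\alpha L_h\le 3/8$, so that the quadratic term costs at most $\tfrac{3\alpha}{8}\|H\|_F^2$ plus $\alpha^2L_h\|e_k\|_F^2$. Collecting the coefficient of $\|H(\bar X_k)\|_F^2$ should leave at least $3\alpha/16$. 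The coefficient of $\|e_k\|_F^2$ is of the form $\alpha\cdot(\text{const}\le17/8)+\alpha^2L_h$, which after substituting the bound on $e_k$ gives $\Gamma_X(\alpha)$. The bound $\Gamma_X\le\Gamma_X^\sharp$ follows from $\alpha L_H\le1/4$ and $\alpha L_h\le3/8$:
\[
\tfrac{17}{32}+\tfrac{3}{32}=\tfrac{5}{8}.
\]

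The main obstacle is the constant bookkeeping in Step 3. The Young weights must be chosen so that the $\|H\|_F^2$ coefficient stays at $3/16$ while the disagreement coefficient stays at $17/8$. If my first split does not achieve this, I will instead fold $\nabla h-H$ and $e_k$ into a single perturbation of $\bar H_k$ before applying Young. A secondary point to check is that the coercivity condition on $\beta$ from Lemma~\ref{lem:destiny-ingredients} is included in the assumed "parameter assumptions", so that \eqref{eq:destiny-coercivity-rfextra} is indeed available at $\bar X_k$.
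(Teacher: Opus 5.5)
Your Steps 1 and 2 are sound. The identity $\nabla h(X)-H(X)=\tfrac32\bigl(\nabla f(X)-\nabla f(XX^\top X)\bigr)$ is what lies behind the paper's bound $\|\nabla h(\bar X_k)-H(\bar X_k)\|_F^2\le\tfrac{21}{8}L_f^2\|Q_k\|_F^2$. Both $\bar X_k$ and $\bar X_k\bar X_k^\top\bar X_k$ lie in $\mathcal B_f$, which is what licenses $L_f$. Your constant is exactly $c=\sqrt3/8$, since $c^2=\tfrac{21}{8}\cdot\tfrac1{56}=\tfrac3{64}$.

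The gap is in Step 3. With the Young weight $\tfrac14$ you chose, the net coefficient in front of $-\alpha\|H(\bar X_k)\|_F^2$ is $1-c-\tfrac14-\tfrac38=\tfrac{3-\sqrt3}{8}\approx0.158$. This is strictly below $\tfrac3{16}=0.1875$, so your claim that it ``should leave at least $3\alpha/16$'' is false. Since the constants are the whole content of this lemma, this step has to be settled, not deferred.

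A repair within your route is to write $\alpha(1+c)\|H\|_F\|e_k\|_F\le\alpha\epsilon\|H\|_F^2+\tfrac{\alpha(1+c)^2}{4\epsilon}\|e_k\|_F^2$. The decrease coefficient then requires $\epsilon\le\tfrac{7-2\sqrt3}{16}\approx0.221$. Keeping the disagreement coefficient below $\tfrac{17}{8}$ requires $\epsilon\ge\tfrac{2(1+c)^2}{17}\approx0.174$. Taking $\epsilon=\tfrac15$ gives a decrease coefficient $\tfrac{17-5\sqrt3}{40}\approx0.2085\ge\tfrac3{16}$ and a disagreement coefficient $\tfrac54(1+c)^2\approx1.85\le\tfrac{17}{8}$. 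After that, your computation $\Gamma_X\le\Gamma_X^\sharp$ goes through unchanged.

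For comparison, the paper never turns the mismatch into a multiple of $\|H(\bar X_k)\|_F$ up front. It proceeds as follows:
\begin{itemize}
\item It splits $\langle\nabla h,\bar H_k\rangle=\langle\nabla h-H,\bar H_k\rangle+\langle H,\bar H_k\rangle$.
\item It applies Young with weights $4$ and $\tfrac1{16}$ to the first piece, carrying $\tfrac{21}{2}\alpha L_f^2\|Q_k\|_F^2$ as a separate term.
\item It writes $\langle H,\bar H_k\rangle=\|H\|_F^2+\langle H,\bar H_k-H\rangle$ and uses weights $\tfrac18$ and $2$.
\item Using $\alpha L_h\le\tfrac38$, it obtains $-\tfrac{3\alpha}{8}\|H\|_F^2$.
\item Only then does it absorb the $\|Q_k\|_F^2$ term via coercivity and $\beta\ge56L_f^2$, which costs exactly $\tfrac3{16}$.
\end{itemize}
The resulting disagreement coefficient is $\tfrac18+2=\tfrac{17}{8}$.

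Your first-order treatment of the mismatch is actually cheaper: it costs $c\approx0.217$, versus the paper's $\tfrac3{16}+\tfrac18$. However, it inflates the cross term through $\|\nabla h\|_F\le(1+c)\|H\|_F$, which is why the admissible window for $\epsilon$ is narrow.

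Your concern about the coercivity condition $\beta\ge(6+21C_0)/5$ applies equally to the paper's proof. The paper also invokes $\|H(\bar X_k)\|_F^2\ge\beta\|Q_k\|_F^2$ without listing that condition among the lemma's hypotheses. The condition is imposed in Theorem~\ref{thm:rf-extra-rate}.
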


\begin{proof}
Write $H_k:=H(\bar X_k)$. By the second part of Lemma~\ref{lem:destiny-ingredients}, we obtain \revise{$\bar X_{k+1}\in\mathcal R$. Hence $\bar X_k,\bar X_{k+1}\in\mathcal R\subset\mathcal B$,} and \revise{$h$ is $L_h$-smooth on $\mathcal{B}$},
\begin{equation}
\label{origin}
h(\bar X_{k+1})
\le h(\bar X_k)-\alpha\langle \nabla h(\bar X_k),\bar H_k\rangle+\revise{\frac{\alpha^2L_h}{2}}\|\bar H_k\|_F^2.
\end{equation}
Split the inner product into
\[
-\alpha\langle \nabla h(\bar X_k)-H_k,\bar H_k\rangle-\alpha\langle H_k,\bar H_k\rangle.
\]
Since $\overline{X}_k\in \mathcal{R}$ implies $||\overline{X}_k||_2^2\leq\frac{7}{6},$ according to the definition of $L_f$, we have
\[
    \|\nabla h(\bar X_k)-H_k\|_F^2\leq \frac{9}{4}\|\bar X_k(\bar X_k^T\bar X_k-I_r)\|_F^2\le \frac{21}{8}L_f^2\|Q_k\|_F^2.
\]
Moreover,
\[
\|H_k-\bar H_k\|_F\le \frac{L_H}{\sqrt n}\|\mathbf X_k-\overline{\mathbf X}_k\|_F,
\qquad
\|\bar H_k\|_F^2\le 2\|H_k\|_F^2+\frac{2L_H^2}{n}\|\mathbf X_k-\overline{\mathbf X}_k\|_F^2.
\]
Next we bound $\langle \nabla h(\bar X_k)-H_k,\bar H_k\rangle, \langle H_k,\bar H_k\rangle$ and \revise{$\frac{\alpha^2L_h}{2}\|\bar H_k\|_F^2$} separately. For the first one, Young's inequality gives
\[
\begin{aligned}
-\alpha\langle \nabla h(\bar X_k)-H_k,\bar H_k\rangle
&\le \alpha\left(4\|\nabla h(\bar X_k)-H_k\|_F^2+\frac1{16}\|\bar H_k\|_F^2\right)\\
&\le \alpha\left[4\cdot \frac{21}{8}L_f^2\|Q_k\|_F^2+\frac{1}{16}\left(2\|H_k\|_F^2+\frac{2L_H^2}{n}\|\mathbf X_k-\overline{\mathbf X}_k\|_F^2\right)\right]\\
&=\frac{21}{2}\alpha L_f^2\|Q_k\|_F^2+\frac{\alpha}{8}\|H_k\|_F^2+\frac{\alpha L_H^2}{8n}\|\mathbf X_k-\overline{\mathbf X}_k\|_F^2.
\end{aligned}
\]
For the second term, we write
\[
\begin{aligned}
-\alpha\langle H_k,\bar H_k\rangle
&=-\alpha\langle H_k,\bar H_k-H_k\rangle-\alpha\|H_k\|_F^2\\
&\le \alpha\left(\frac18\|H_k\|_F^2+2\|\bar H_k-H_k\|_F^2\right)-\alpha\|H_k\|_F^2\\
&\le \alpha\left(\frac18\|H_k\|_F^2+\frac{2L_H^2}{n}\|\mathbf X_k-\overline{\mathbf X}_k\|_F^2\right)-\alpha\|H_k\|_F^2\\
&=-\frac{7\alpha}{8}\|H_k\|_F^2+\frac{2\alpha L_H^2}{n}\|\mathbf X_k-\overline{\mathbf X}_k\|_F^2.
\end{aligned}
\]
Finally, using the bound on $\|\bar H_k\|_F^2$, we obtain
\[
\begin{aligned}
\revise{\frac{\alpha^2L_h}{2}}\|\bar H_k\|_F^2
&\le \revise{\frac{\alpha^2L_h}{2}}\left(2\|H_k\|_F^2+\frac{2L_H^2}{n}\|\mathbf X_k-\overline{\mathbf X}_k\|_F^2\right)\\
&=\revise{\alpha^2L_h}\|H_k\|_F^2+\revise{\frac{\alpha^2L_hL_H^2}{n}}\|\mathbf X_k-\overline{\mathbf X}_k\|_F^2.
\end{aligned}
\]
Therefore, plugging the above three expressions into \eqref{origin}, we have
\[
h(\bar X_{k+1})
    \le h(\bar X_k)+\frac{21}{2}\alpha L_f^2\|Q_k\|_F^2+\Gamma_X(\alpha)\|\mathbf X_k-\overline{\mathbf X}_k\|_F^2+
\left(\revise{-\frac{3\alpha}{4}+\alpha^2L_h}\right)\|H_k\|_F^2.
\]
Because \revise{$\alpha\le 3/(8L_h)$}, the last coefficient is bounded above by $-3\alpha/8$. Since $\bar X_k\in\mathcal R$, Lemma~\ref{lem:destiny-ingredients} gives
\[
\|H_k\|_F^2\ge \beta\|Q_k\|_F^2,
\]
hence
\[
\frac{21}{2}\alpha L_f^2\|Q_k\|_F^2\le \frac{21L_f^2}{2\beta}\alpha\|H_k\|_F^2.
\]
Substituting this estimate yields
\[
h(\bar X_{k+1})
\le h(\bar X_k)-\left(\frac38-\frac{21L_f^2}{2\beta}\right)\alpha\|H_k\|_F^2+
\Gamma_X(\alpha)\|\mathbf X_k-\overline{\mathbf X}_k\|_F^2,
\]
\revise{Because $\beta\ge 56L_f^2$, the coefficient satisfies $\frac38-\frac{21L_f^2}{2\beta}\ge \frac{3}{16}$, which gives \eqref{eq:descent-H-form}.} Since \revise{$\alpha\le \min\left\{\frac{3}{8L_h},\frac{1}{4L_H}\right\}$}, then
\[
\Gamma_X(\alpha)\le \frac{17\alpha L_H^2}{8n}+\frac{3\alpha L_H^2}{8n}=\frac{5\alpha L_H^2}{2n}\leq \frac{5L_H}{8n}=\Gamma_X^\sharp.
\]
\end{proof}

\begin{theorem}[Exact $\mathcal O(1/K)$ convergence rate for the averaged iterate]
\label{thm:rf-extra-rate}
\revise{Assume Assumptions~\ref{ass:smooth} and \ref{ass:network}. Let $\delta_S>0$ satisfy}
\[
\revise{\delta_S\le c_1\min\left\{\frac{1}{1+\sigma_2(W)},\frac{\sqrt n}{L_H}\right\},}
\]
\revise{and suppose $\bar X_0\in\mathcal R$ and $(\mathbf X_0,\mathbf s_0)\in\mathcal N_S(\delta_S)$. Assume that $\beta$ satisfies}
\[
\revise{\beta\ge \max\left\{56L_f^2,\frac{6+21C_0}{5},\,12\sqrt2\left(M_g+1\right)\right\}.}
\]
\revise{Let}
\[
\begin{aligned}
\Delta_h&:=h(\bar X_0)-\inf_{X\in\mathcal R} h(X),
\qquad
\Gamma_X^\sharp:=\frac{5L_H}{8n},
\qquad
C_{P,0}^\sharp:=\|\mathbf X_0-\overline{\mathbf X}_0\|_F^2+C_{P,0},\\
B_H&:=c_2L_H\sqrt n,
\qquad
 a:=\frac{1+\sigma}{2}.
\end{aligned}
\]
If
\begin{equation}
\label{eq:alpha-absorb-range}
\revise{\alpha\le \bar\alpha
:=\min\left\{\frac{1}{4L_H},\frac{1}{4\beta L_b},\frac{1}{4\beta},\frac{c_1(1-\sigma)}{8c_2L_H},\sqrt{\frac{(1-a)\delta_S}{B_HM_H}},\frac{1}{M_H+1},\frac{3}{8L_h},\frac{3}{32\Gamma_X^\sharp C_{P,1}},1\right\},}
\end{equation}
then, for every integer $K\ge0$,
\begin{equation}
\label{eq:H-rate-ergodic}
\frac1{K+1}\sum_{k=0}^{K}\|H(\bar X_k)\|_F^2
\le \revise{\frac{32\bigl(\Delta_h+\Gamma_X^\sharp C_{P,0}^\sharp\bigr)}{3\alpha (K+1)}}.
\end{equation}
Consequently,
\begin{equation}
\label{eq:stationarity-components-rate}
\frac1{K+1}\sum_{k=0}^{K}\|G(\bar X_k)\|_F^2
\le \revise{\frac{32\bigl(\Delta_h+\Gamma_X^\sharp C_{P,0}^\sharp\bigr)}{3\alpha (K+1)}}.
\end{equation}
\begin{equation}
\label{eq:feasibility-rate}
\frac1{K+1}\sum_{k=0}^{K}\|\bar X_k^\top\bar X_k-I_r\|_F^2
\le \revise{\frac{32\bigl(\Delta_h+\Gamma_X^\sharp C_{P,0}^\sharp\bigr)}{3\alpha\beta (K+1)}}.
\end{equation}
Hence the RF-EXTRA averaged iterate attains an exact $\mathcal O(1/K)$ convergence rate.
\end{theorem}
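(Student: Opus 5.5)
The plan is to sum the descent inequality of Lemma~\ref{prop:rf-extra-descent} along the trajectory and absorb the disagreement terms through Lemma~\ref{lem:joint-error-sum}.

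\emph{Step 1: invariance.} Since $\bar\alpha$ is at most every stepsize threshold in Lemma~\ref{lem:joint-error-sum}, and $\beta\ge 12\sqrt2(M_g+1)$, repeated application of Lemma~\ref{lem:star_inv} gives, for every $k\ge0$,
\[
\bar X_k\in\mathcal R,\qquad (\mathbf X_k,\mathbf s_k)\in\mathcal N_S(\delta_S).
\]
Lemma~\ref{lem:joint-error-recursion} then gives $X_{i,k}\in\mathcal B$ for all $i$ and $k$. Norm equivalence yields $\|\mathbf X_k-\overline{\mathbf X}_k\|_F\le\delta_S/c_1\le\min\{1,\sqrt n/L_H\}$. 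Hence the hypotheses of the second part of Lemma~\ref{lem:destiny-ingredients} hold at every iteration.

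\emph{Step 2: descent at each step.} The conditions $\beta\ge 56L_f^2$ and $\alpha\le\min\{3/(8L_h),1/(4L_H)\}$ are satisfied, so Lemma~\ref{prop:rf-extra-descent} applies for every $k$. Using $\Gamma_X(\alpha)\le\Gamma_X^\sharp$, it gives
\[
h(\bar X_{k+1})\le h(\bar X_k)-\tfrac{3\alpha}{16}\|H(\bar X_k)\|_F^2+\Gamma_X^\sharp\|\mathbf X_k-\overline{\mathbf X}_k\|_F^2 .
\]

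\emph{Step 3: telescoping.} Sum over $k=0,\dots,K$. Since $\bar X_{K+1}\in\mathcal R$, we have $h(\bar X_{K+1})\ge\inf_{\mathcal R}h$, and telescoping gives
\[
\tfrac{3\alpha}{16}\sum_{k=0}^K\|H(\bar X_k)\|_F^2\le\Delta_h+\Gamma_X^\sharp\sum_{k=0}^K\|\mathbf X_k-\overline{\mathbf X}_k\|_F^2 .
\]

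\emph{Step 4: bounding the disagreement sum.} Split off the $k=0$ term and apply \eqref{eq:joint-sum-final} to the rest, dropping the $\mathbf s$ part and the nonnegative $k=K$ endpoint:
\[
\sum_{k=0}^K\|\mathbf X_k-\overline{\mathbf X}_k\|_F^2\le\|\mathbf X_0-\overline{\mathbf X}_0\|_F^2+C_{P,0}+\alpha^2C_{P,1}\sum_{k=0}^{K}\|H(\bar X_k)\|_F^2 .
\]
The first two terms together equal $C_{P,0}^\sharp$.

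\emph{Step 5: absorption.} This is the main step. The disagreement bound contributes $\Gamma_X^\sharp\alpha^2C_{P,1}\sum_k\|H(\bar X_k)\|_F^2$ to the right-hand side, and it must be absorbed into the left. The stepsize condition $\alpha\le 3/(32\Gamma_X^\sharp C_{P,1})$ gives $\Gamma_X^\sharp C_{P,1}\alpha^2\le 3\alpha/32$. Moving this term to the left leaves coefficient $\tfrac{3\alpha}{16}-\tfrac{3\alpha}{32}=\tfrac{3\alpha}{32}$, so
\[
\tfrac{3\alpha}{32}\sum_{k=0}^K\|H(\bar X_k)\|_F^2\le\Delta_h+\Gamma_X^\sharp C_{P,0}^\sharp .
\]
Dividing by $3\alpha(K+1)/32$ gives \eqref{eq:H-rate-ergodic}. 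The only care needed is in aligning the index shift between $\mathbf X_k$ and $\mathbf X_{k+1}$ in Step 4.

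\emph{Step 6: the component rates.} Since $\bar X_k\in\mathcal R$ and $\beta\ge(6+21C_0)/5$, the coercivity estimate \eqref{eq:destiny-coercivity-rfextra} gives
\[
\|H(\bar X_k)\|_F^2\ge\|G(\bar X_k)\|_F^2 \quad\text{and}\quad \|H(\bar X_k)\|_F^2\ge\beta\|\bar X_k^\top\bar X_k-I_r\|_F^2 .
\]
The first inequality yields \eqref{eq:stationarity-components-rate}. Dividing the second by $\beta$ yields \eqref{eq:feasibility-rate}.

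The hypothesis $\alpha\le 1/(M_H+1)$ is not needed in this argument and only provides extra slack.
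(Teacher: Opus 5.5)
Your proposal is correct and follows the paper's proof essentially step for step. Like the paper, you establish invariance by repeated use of Lemma~\ref{lem:star_inv}, sum the descent inequality of Lemma~\ref{prop:rf-extra-descent} with $\Gamma_X(\alpha)\le\Gamma_X^\sharp$, split off the $k=0$ disagreement term and bound the rest by Lemma~\ref{lem:joint-error-sum}, absorb via $\alpha\le 3/(32\Gamma_X^\sharp C_{P,1})$, and finish with the coercivity estimate \eqref{eq:destiny-coercivity-rfextra}. The differences are cosmetic: you apply \eqref{eq:joint-sum-final} at index $K$ and drop the last term, whereas the paper applies it at $K-1$ and then enlarges the $H$-sum, and you correctly note that $\alpha\le 1/(M_H+1)$ is never used (the paper's argument does not use it either).
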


\begin{proof}
\revise{Repeated application of Lemma~\ref{lem:star_inv} gives $\bar X_k\in\mathcal R$ and $(\mathbf X_k,\mathbf s_k)\in\mathcal N_S(\delta_S)$ for all $k\ge0$. Hence $\bar X_{k+1}\in\mathcal R$ as well. By norm equivalence,}
\[
\revise{\|\mathbf X_k-\overline{\mathbf X}_k\|_F\le \|\mathbf p_k\|_F\le \frac{1}{c_1}\|\mathbf p_k\|_S\le \frac{\delta_S}{c_1}\le \frac{1}{1+\sigma_2(W)}\le 1.}
\]
\revise{Since $\bar X_k\in\mathcal R$, we have $\|\bar X_k\|_F\le \sqrt{7r/6}$; therefore, for every $i$,}
\[
\revise{\|X_{i,k}\|_F\le \|\bar X_k\|_F+\|X_{i,k}-\bar X_k\|_F\le \sqrt{7r/6}+\|\mathbf X_k-\overline{\mathbf X}_k\|_F\le \sqrt{7r/6}+1,}
\]
\revise{so $X_{i,k}\in\mathcal B$ for all $i,k$. Moreover, $\mathcal R\subset\mathcal B$, hence $\bar X_k,\bar X_{k+1}\in\mathcal B$. Therefore Lemma~\ref{prop:rf-extra-descent} applies at every step.} Summing \eqref{eq:descent-H-form} from $k=0$ to $K$ and using $\Gamma_X(\alpha)\le \Gamma_X^\sharp$ yield
\[
h(\bar X_{K+1})
\le h(\bar X_0)-\revise{\frac{3\alpha}{16}}\sum_{k=0}^{K}\|H(\bar X_k)\|_F^2+\Gamma_X^\sharp\sum_{k=0}^{K}\|\mathbf X_k-\overline{\mathbf X}_k\|_F^2.
\]
Now split the consensus sum as
\[
\sum_{k=0}^{K}\|\mathbf X_k-\overline{\mathbf X}_k\|_F^2
=\|\mathbf X_0-\overline{\mathbf X}_0\|_F^2+\sum_{k=0}^{K-1}\|\mathbf X_{k+1}-\overline{\mathbf X}_{k+1}\|_F^2.
\]
Applying Lemma~\ref{lem:joint-error-sum} to the second term and discarding the nonnegative $\|\mathbf s_{k+1}-\overline{\mathbf s}_{k+1}\|_F^2$ part give
\[
\sum_{k=0}^{K}\|\mathbf X_k-\overline{\mathbf X}_k\|_F^2
\le C_{P,0}^\sharp+\alpha^2 C_{P,1}\sum_{k=0}^{K-1}\|H(\bar X_k)\|_F^2
\le C_{P,0}^\sharp+\alpha^2 C_{P,1}\sum_{k=0}^{K}\|H(\bar X_k)\|_F^2.
\]
Substituting this estimate gives
\[
h(\bar X_{K+1})
\le h(\bar X_0)-\revise{\left(\frac{3\alpha}{16}-\Gamma_X^\sharp\alpha^2 C_{P,1}\right)}
\sum_{k=0}^{K}\|H(\bar X_k)\|_F^2+\Gamma_X^\sharp C_{P,0}^\sharp.
\]
By \eqref{eq:alpha-absorb-range},
\[
\revise{\Gamma_X^\sharp\alpha C_{P,1}\le \frac{3}{32}},
\qquad\text{hence}\qquad
\revise{\frac{3\alpha}{16}-\Gamma_X^\sharp\alpha^2 C_{P,1}\ge \frac{3\alpha}{32}.}
\]
Since $h(\bar X_{K+1})\ge \inf_{X\in\mathcal R}h(X)$, we conclude that
\[
\revise{\frac{3\alpha}{32}}\sum_{k=0}^{K}\|H(\bar X_k)\|_F^2
\le h(\bar X_0)-h(\bar X_{K+1})+\Gamma_X^\sharp C_{P,0}^\sharp
\le \Delta_h+\Gamma_X^\sharp C_{P,0}^\sharp.
\]
Dividing by $K+1$ proves \eqref{eq:H-rate-ergodic}. Finally, Lemma~\ref{lem:destiny-ingredients} gives
\[
\|H(\bar X_k)\|_F^2\ge \|G(\bar X_k)\|_F^2+\beta\|\bar X_k^\top\bar X_k-I_r\|_F^2,
\]
hence \eqref{eq:stationarity-components-rate} and \eqref{eq:feasibility-rate} follow immediately from \eqref{eq:H-rate-ergodic}.
\end{proof}

The convergence proof preserves the averaged iterate and the joint error introduced earlier, while replacing a Frobenius-norm contraction argument by an equivalent-norm analysis of the EXTRA subsystem. As a result, the final estimate is obtained through a transparent descent-and-accumulation argument: averaged descent provides the main decrease, the joint-error recursion quantifies the network-induced perturbation, and the summed joint-error bound makes it possible to conclude under explicit conditions on $\alpha$ and $\beta$.

\begin{remark}
\revise{The stepsize $\alpha$ is required to be sufficiently small, but it can still be chosen as a positive constant independent of $K$. The conditions on the penalty parameter $\beta$ are explicit and are used only to keep the averaged penalty step inside the neighborhood where the Stiefel geometry is controlled. The quantity $\delta_S$ may need to be small; however, this can be achieved by initializing all nodes with the same iterate, in which case the initial disagreement vanishes.}
\end{remark}

\section{Experiments}
\label{sec:experiments}

In this section, we compare RF-EXTRA with four decentralized baselines: DPRGD \cite{Deng2025DecenProjRG}, DPRGT \cite{Wang2025DecenPGTRiem}, DESTINY \cite{Wang2022StiefelAL}, and REXTRA \cite{Wu2025RiemannianEC}. Our implementation follows the experimental setting of \cite{Wu2025RiemannianEC}.

\subsection{Decentralized principal component analysis}
\label{subsec:dpca}

The decentralized principal component analysis (PCA) problem seeks a common low-dimensional subspace that preserves the maximal variation of data distributed across multiple agents. It can be formulated as
\begin{equation}
\label{eq:decentralized-pca}
\min_{\mathbf X\in \mathcal M^n} -\frac{1}{2n}\sum_{i=1}^n \operatorname{tr}(X_i^\top A_i^\top A_i X_i),
\qquad \text{s.t.}\qquad X_1=\cdots=X_n,
\end{equation}
where $\mathcal M^n:=\mathrm{St}(d,r)^n$, $\mathrm{St}(d,r)$ is the Stiefel manifold, $n$ is the number of agents, and $A_i\in\mathbb R^{m_i\times d}$ is the local data matrix stored at agent $i$. As in \cite{Wu2025RiemannianEC}, if $X^*$ solves \eqref{eq:decentralized-pca}, then $X^*Q$ is also a solution for any orthogonal matrix $Q\in\mathbb R^{r\times r}$. We therefore measure the distance to a reference solution by
\[
 d_s(X,X^*):=\min_{Q^\top Q=QQ^\top=I_r}\|XQ-X^*\|.
\]

\subsubsection{Synthetic dataset}

We fix $m_1=\cdots=m_n=1000$, $d=10$, $r=5$, and $n=8$. A Gaussian matrix $B\in\mathbb R^{1000n\times d}$ is generated and decomposed as $B=U\Sigma V^\top$. We then set $\widetilde\Sigma=\operatorname{diag}(\xi^j)$ with $\xi\in(0,1)$ and construct $A=U\widetilde\Sigma V^\top$, whose rows are partitioned uniformly across the $n$ agents. The first $r$ columns of $V$ form a solution to \eqref{eq:decentralized-pca}. In all experiments we use $\xi=0.8$, and set a Riemannian gradient stopping threshold $\|\operatorname{grad} f(\bar{X}_k)\|<10^{-8}$. For the PCA problem, the effective scaling is chosen as
\[
\alpha=\frac{\hat\beta n}{\sum_{i=1}^n m_i}.
\]
All algorithms use constant step sizes. For the synthetic comparison below, the step size is selected from
\[
\hat{\beta}\in\{1,2,4,6,8\}\times\{10^{-5},10^{-4},10^{-3},10^{-2}\}.
\]

We first study the robustness of RF-EXTRA with respect to graph topology and the internal parameter $\beta$. Figure~\ref{fig:rfextra-robustness} shows that the same qualitative behavior is preserved across ring, star, and ER graphs of different densities, and also across a wide range of internal $\beta$ values. This supports the claim that the retraction-free recursion is reasonably stable under moderate changes in both network structure and internal tuning.

\begin{figure}[t]
\centering
\includegraphics[width=0.9\textwidth]{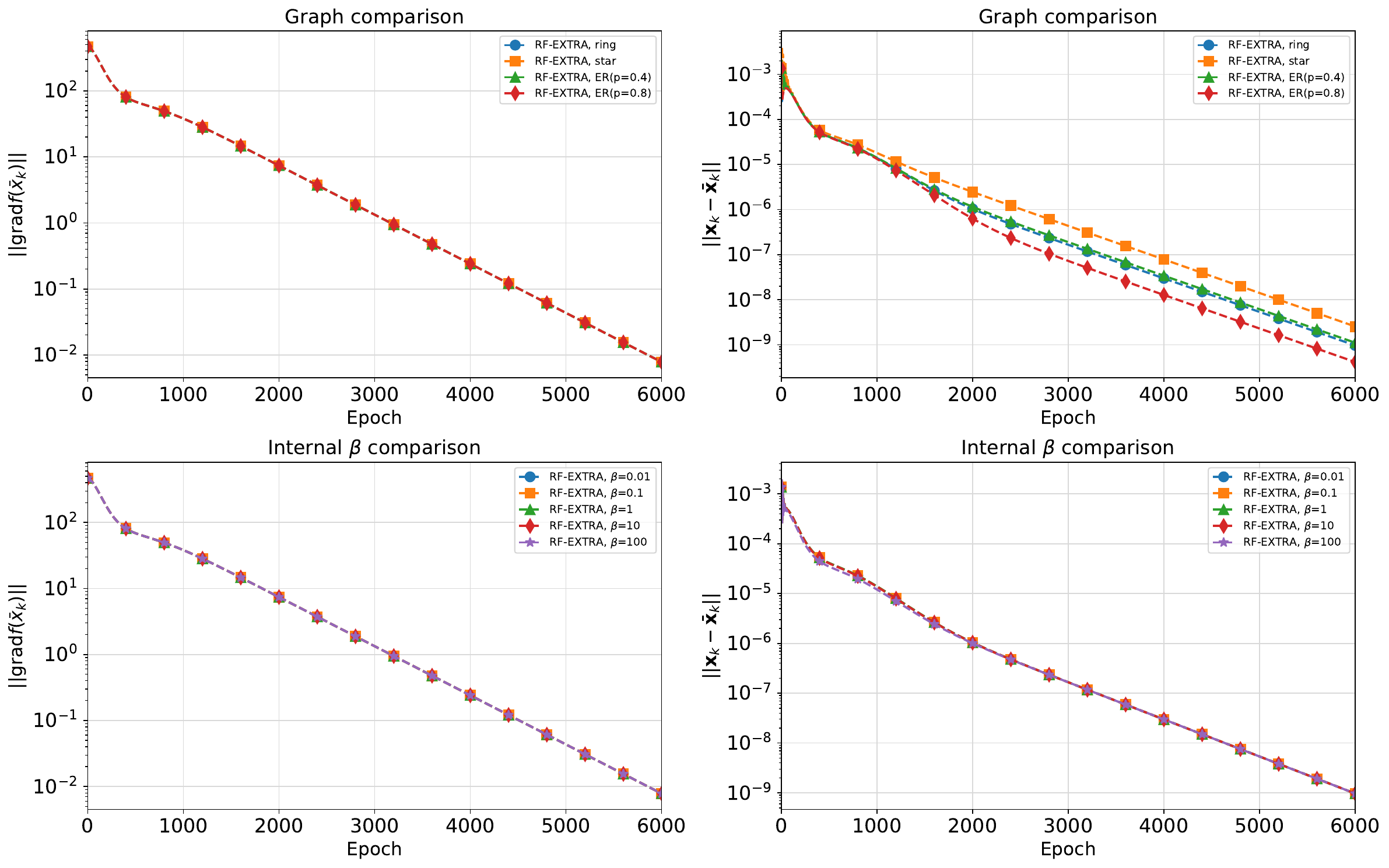}
\caption{Synthetic decentralized PCA: robustness of RF-EXTRA with respect to graph topology and internal $\beta$. Top row: stationarity and consensus trajectories under ring, star, ER$(0.4)$, and ER$(0.8)$ graphs. Bottom row: the same quantities when the internal parameter $\beta$ varies in $\{0.01,0.1,1,10,100\}$ on the ring graph.}
\label{fig:rfextra-robustness}
\end{figure}

We next compare the wall-clock efficiency of RF-EXTRA and REXTRA under matched graph settings. Table~\ref{tab:synthetic-wallclock} shows that, once both methods are stably tuned, RF-EXTRA is typically slightly faster than REXTRA, which is consistent with the additional cost of retraction operations in the latter.

\begin{table}[t]
\centering
\caption{Final wall-clock time (seconds) of RF-EXTRA and REXTRA on ER graphs. Smaller is better.}
\label{tab:synthetic-wallclock}
\begin{tabular}{lccc}
\toprule
Method / stepsize & $p=0.4$ & $p=0.6$ & $p=0.8$\\
\midrule
RF-EXTRA, $\hat\beta=0.3$   & 0.617 & 0.583 & 0.588\\
RF-EXTRA, $\hat\beta=0.05$  & 3.388 & 3.268 & 3.384\\
RF-EXTRA, $\hat\beta=0.008$ & 20.433 & 20.716 & 22.598\\
REXTRA, $\hat\beta=0.3$     & 0.609 & 0.645 & 0.670\\
REXTRA, $\hat\beta=0.05$    & 3.613 & 3.574 & 3.570\\
REXTRA, $\hat\beta=0.008$   & 23.447 & 22.364 & 23.669\\
\bottomrule
\end{tabular}
\end{table}

After these focused robustness and efficiency checks, we turn to the standard synthetic decentralized PCA task under the same ER$(0.6)$ graph, seed, and epoch budget for all methods. Figure~\ref{fig:synthetic-epoch} reports the best run of RF-EXTRA, DESTINY, DPRGT, DPRGD, and REXTRA under this unified search space. RF-EXTRA is competitive with the strongest baselines across the communication budget, and its performance is close to that of REXTRA. Combined with the wall-clock comparison in Table~\ref{tab:synthetic-wallclock}, this indicates that RF-EXTRA achieves similar empirical behavior while avoiding the extra retraction cost.

\begin{figure}[t]
\centering
\includegraphics[width=\textwidth]{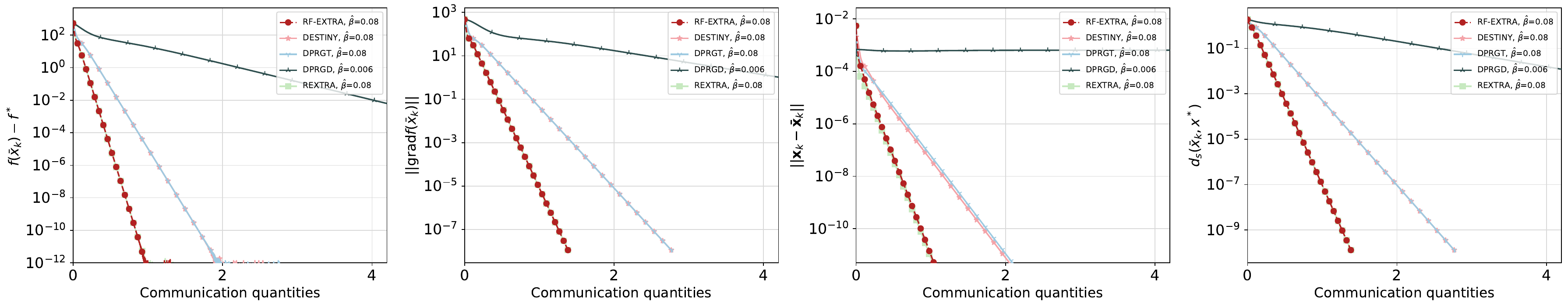}
\caption{Synthetic decentralized PCA on ER$(0.6)$ versus communication quantities. Each method uses its best step size selected from $\{1,2,4,6,8\}\times\{10^{-5},10^{-4},10^{-3},10^{-2}\}$. Under this matched search space, RF-EXTRA, DESTINY, DPRGT, and REXTRA all select $\hat\beta=0.08$, while DPRGD selects $\hat\beta=0.006$.}
\label{fig:synthetic-epoch}
\end{figure}

\subsubsection{MNIST dataset}

To further evaluate the practical behavior of RF-EXTRA on real data, we conduct experiments on the MNIST dataset. The dataset contains $60{,}000$ handwritten digit images of size $28\times 28$, which are used to generate the local matrices $A_i$. We first normalize the pixel values by dividing by $255$ and then randomly partition the data into $n=8$ agents with equal numbers of samples. As a result, each agent holds a local matrix $A_i$ of dimension $\frac{60000}{n}\times 784$. We compute the top $r=2$ principal components with ambient dimension $d=784$, use a Riemannian gradient stopping threshold $\|\operatorname{grad} f(\bar{X}_k)\|<10^{-6}$, and choose the effective scaling
\[
\alpha=\frac{\hat\beta}{60000}.
\]
Similar to the synthetic PCA setting, all methods use constant step sizes, and we select the best available step size from the grid
\[
\hat{\beta}\in\{1,2,6\}\times\{10^{-4},10^{-3},10^{-2}\}.
\]
Among the runs already completed in this grid, the strongest available setting is $\hat\beta=0.06$ for RF-EXTRA, DESTINY, and DPRGT, while the best available DPRGD run uses $\hat\beta=0.02$. Accordingly, Figure~\ref{fig:mnist-comm} reports RF-EXTRA, DESTINY, and DPRGT at $\hat\beta=0.06$ together with DPRGD at $\hat\beta=0.02$.

Figure~\ref{fig:mnist-comm} shows that RF-EXTRA and DESTINY perform similarly and both improve on DPRGT in the later stage, while DPRGD remains clearly weaker within the same communication budget.

\begin{figure}[t]
\centering
\includegraphics[width=\textwidth]{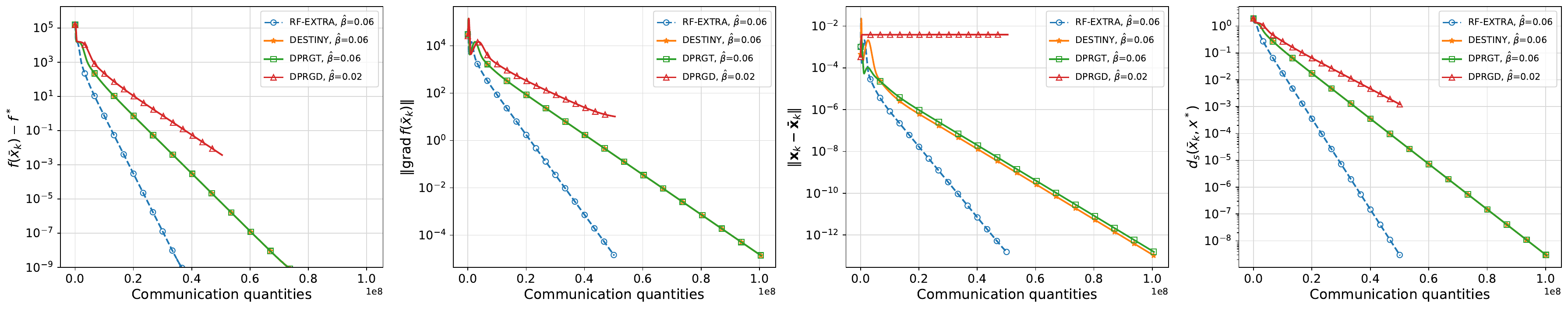}
\caption{Decentralized PCA on the MNIST dataset versus communication quantities. RF-EXTRA, DESTINY, and DPRGT are plotted at $\hat\beta=0.06$, while DPRGD is plotted at its best existing MNIST noMPI step size $\hat\beta=0.02$ among the currently available runs from $\{1,2,6\}\times\{10^{-4},10^{-3},10^{-2}\}$.}
\label{fig:mnist-comm}
\end{figure}

\subsection{Decentralized low-rank matrix completion}
\label{subsec:lrmc}

Low-rank matrix completion (LRMC) aims to recover a low-rank matrix $A\in\mathbb R^{d\times T}$ from a subset of its observed entries. Let $\Omega$ denote the index set of observed entries. The rank-$r$ LRMC problem can be written as
\[
\min_{X\in \mathrm{Gr}(d,r),\;V\in \mathbb R^{r\times T}} \frac12\bigl\|\mathcal P_{\Omega}\odot(XV-A)\bigr\|^2,
\]
where $\mathrm{Gr}(d,r)$ is the Grassmann manifold, $\odot$ denotes the Hadamard product, and $\mathcal P_{\Omega}$ is the observation mask. In the decentralized setting, the partially observed matrix is partitioned column-wise across $n$ agents into local blocks $A_1,\ldots,A_n$. Using the standard representation of the Grassmann manifold through the Stiefel manifold, the problem can be reformulated as a decentralized optimization problem over $\mathrm{St}(d,r)$,
\[
\min_{X_i\in \mathrm{St}(d,r)} \frac12\sum_{i=1}^n \bigl\|\mathcal P_{\Omega_i}\odot\bigl(X_iV_i(X_i)-A_i\bigr)\bigr\|^2,
\qquad \text{s.t.}\qquad X_1=\cdots=X_n,
\]
where $\Omega_i$ is the local observation pattern and $V_i(X_i)$ is the corresponding least-squares factor. Following \cite{Wu2025RiemannianEC}, we consider a synthetic setup with $T=1000$, $d=100$, $r=5$, and $n=8$, where the ground-truth low-rank matrix is generated from Gaussian factors and the observation mask is sampled with rate $\mu=r(d+T-r)/(dT)$. More precisely, if $L\in\mathbb R^{d\times r}$ and $R\in\mathbb R^{r\times T}$ are Gaussian factors, then the target matrix is generated as
\[
A = LR + 10^{-3}E,
\]
where $E\in\mathbb R^{d\times T}$ has i.i.d. standard normal entries. Thus, the LRMC instance is mildly perturbed from an exactly rank-$r$ model. We adopt a ring graph to model the communication network among agents. For LRMC, we use the effective scaling
\[
\alpha=\hat\beta\times n.
\]
All algorithms are implemented with constant step sizes, and the candidate step sizes are selected from the grid
\[
\hat\beta\in \{1.25,2.5,6.25,10\}\times\{10^{-5},10^{-4},10^{-3}\}.
\]
We run each method for at most $1500$ epochs and terminate early if $\|\operatorname{grad} f(\bar{X}_k)\|<10^{-6}$.

Because the perturbed LRMC instance does not admit a practically meaningful exact reference solution, we only report the stationarity and Euclidean consensus panels in the revised communication plot. Figure~\ref{fig:lrmc-comm} shows that RF-EXTRA reaches low stationarity and consensus levels more rapidly than DESTINY and DPRGT in the plotted regime, while DPRGD remains less competitive within the same budget. Figure~\ref{fig:lrmc-epoch} further compares representative RF-EXTRA and DESTINY step sizes on the same communication axis and shows the same qualitative trend.

\begin{figure}[t]
\centering
\includegraphics[width=0.82\textwidth]{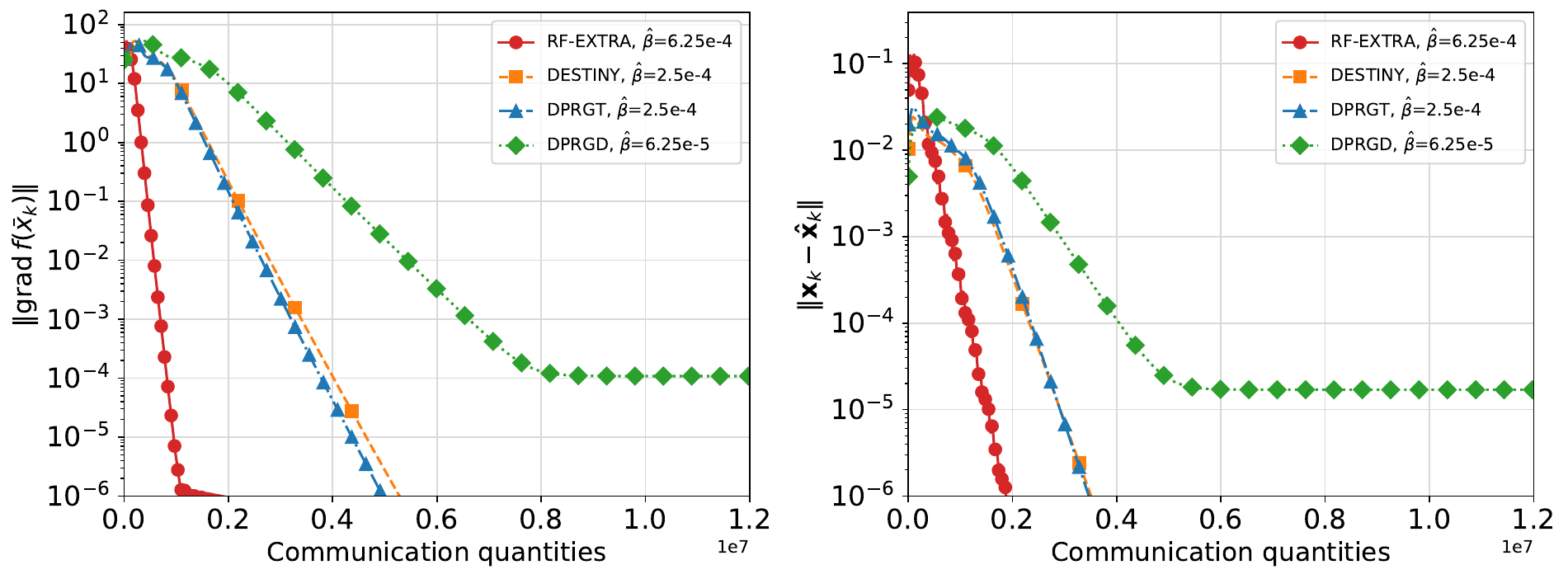}
\caption{Decentralized LRMC on the ring graph versus communication quantities. Only the stationarity and Euclidean consensus panels are shown, since the perturbed instance does not provide an exact-solution reference panel of comparable value. For each method, the step size is selected by the earliest epoch at which $\|\operatorname{grad} f(\bar{x}_k)\|<10^{-6}$.}
\label{fig:lrmc-comm}
\end{figure}

\begin{figure}[t]
\centering
\includegraphics[width=0.82\textwidth]{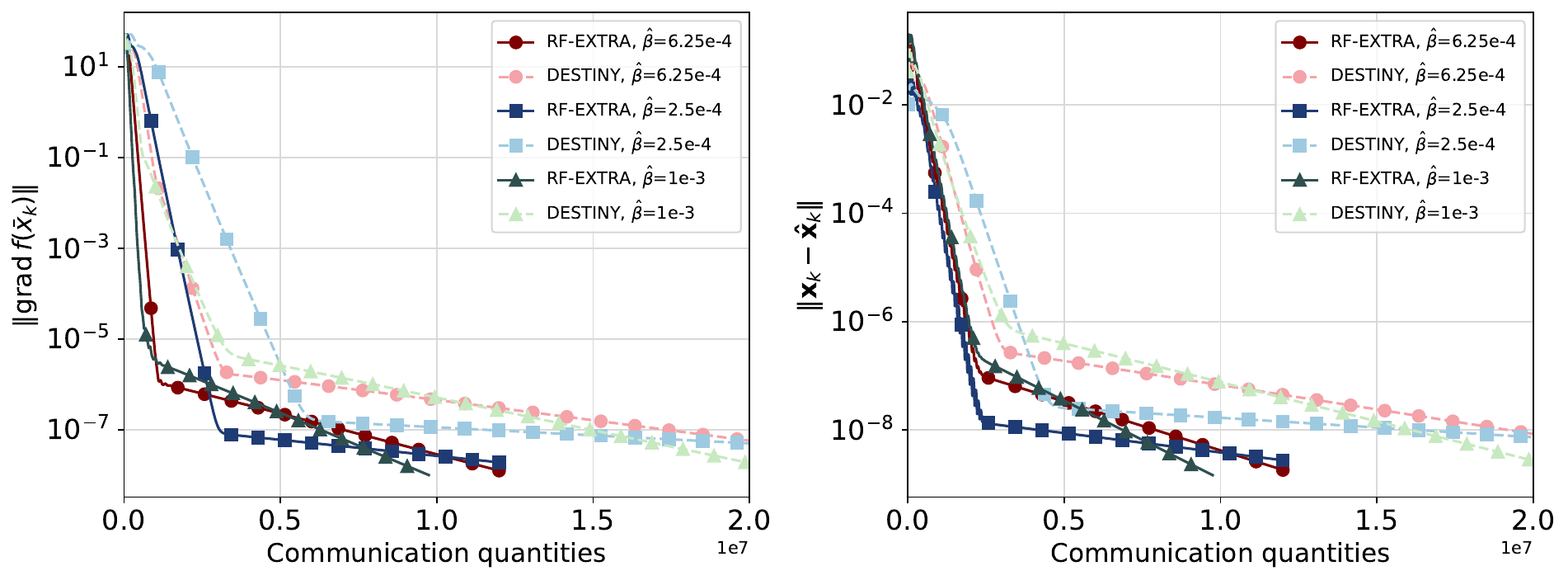}
\caption{Decentralized LRMC on the ring graph versus communication quantities for representative RF-EXTRA and DESTINY step sizes. We again retain only the stationarity and Euclidean consensus panels.}
\label{fig:lrmc-epoch}
\end{figure}

\section{Conclusion}
\label{sec:conclusion}

We proposed RF-EXTRA, a decentralized retraction-free method for optimization on the Stiefel manifold. The method combines an ambient-space surrogate for the orthogonality-constrained local models with an EXTRA-based primal-dual recursion, thereby preserving a simple decentralized communication structure while avoiding explicit per-iteration retractions. On the theoretical side, the analysis is built on the averaged iterate and the joint error $(\mathbf X_k-\overline{\mathbf X}_k,\mathbf s_k-\overline{\mathbf s}_k)$. By establishing a contractive recursion for the joint error under an equivalent norm and then comparing the different terms in the descent analysis, we obtain an exact $\mathcal O(1/K)$ convergence guarantee for the averaged iterate. The experiments indicate that RF-EXTRA is a competitive retraction-free decentralized solver for optimization problems on the Stiefel manifold. On PCA and low-rank matrix completion, it delivers favorable empirical performance and communication efficiency relative to the reported baselines. These results support retraction-free decentralized correction as a promising direction for large-scale optimization on matrix manifolds.

\section*{Acknowledgements}
We gratefully acknowledge ReasFlow \cite{reasflowteam2026reasflow}, a reasoning-centric scientific discovery assistant, for its substantial contributions to the preparation of this paper. A significant portion of the work, including the literature review, mathematical proofs, numerical experiments, and the initial manuscript draft, was generated automatically with the assistance of ReasFlow. The authors’ contributions lay primarily in identifying the research problem, proposing the high-level algorithmic design, articulating the key ideas underlying the mathematical proofs, specifying the methodology and requirements for the numerical experiments, and polishing the manuscript to meet the standards required for submission. In particular, the authors devoted considerable effort to verifying the correctness of the mathematical proofs and refining the resulting arguments.

\bibliographystyle{unsrtnat}
\bibliography{references}

\end{document}